%BeginFileInfo
%%Publisher=ARXIV
%%Project=AOP
%%Manuscript=AOP863
%EndFileInfo
%
% Institute of Mathematical Statistics (IMI)
% Journal "The Annals of Probabability"

%secthm,seceqn,secfloat,,number,noautosecdot
\documentclass[aop,MSNbibl,nameyear,dvips]{arximspdf}

% settings
%

% article settings
\doi{10.1214/13-AOP863} %kopijuoti is PTS
\volume{41}
\issue{6}
\pubyear{2013}
\firstpage{4050}
\lastpage{4079}

\makeatletter
\newcommand{\rright}{\right}
\newcommand{\lleft}{\left}
\newcommand{\rrvert}{\vert}
\newcommand{\llvert}{\vert}
\newcommand{\eqref}[1]{(\ref{#1})}
\newproclaim{example}{Example}%[section]
\newproclaim{definition}[example]{Definition}
\newproclaim{convention}[example]{Convention}

\newproclaim{assumption}[example]{Assumption}
\newtheorem{lemma}[example]{Lemma}
\newtheorem{proposition}[example]{Proposition}
\newtheorem{theorem}[example]{Theorem}

\newtheorem{conjecture}{Conjecture}
\newtheorem{question}[conjecture]{Question}
\newtheorem{corollary}[example]{Corollary}

\newproclaim{remark}[example]{Remark}

\newcommand{\bU}{\mathbf{U}}
\newcommand{\bD}{\mathbf{D}}
\newcommand{\bR}{\mathbf{R}}
\newcommand{\bQ}{\mathbf{Q}}
\newcommand{\bT}{\mathbf{T}}
\newcommand{\bX}{\mathbf{X}}
\newcommand{\bY}{\mathbf{Y}}
\newcommand{\bZ}{\mathbf{Z}}
\newcommand{\bW}{\mathbf{W}}
\newcommand{\bV}{\mathbf{V}}
\newcommand{\bI}{\mathbf{I}}
\newcommand{\bzero}{\mathbf{0}}
\newcommand{\bC}{\mathbf{C}}
\newcommand{\bP}{\mathbf{P}}
\newcommand{\bB}{\mathbf{B}}
\newcommand{\bA}{\mathbf{A}}
\newcommand{\1}{\mathbf{1}}
\newcommand{\Z}{\mathbb{Z}}
\newcommand{\spec}{\mathrm{sp}}
\newcommand{\lcal}{\mathcal L}
\newcommand{\ecal}{\mathcal{E}}
\newcommand{\err}{\operatorname{Error}}
\newcommand{\ups}{\upsilon}
\newcommand{\Sin}{\texttt{Sin}}

\newcommand{\eps}{\varepsilon}
\newcommand{\prob}{\mathbb P}
\newcommand{\E}{\mathbb E}
\newcommand{\ci}{i}
\newcommand{\T}{\mathbb{T}}
\makeatother

\begin{document}
\begin{frontmatter}

\title{The top eigenvalue of the random Toeplitz matrix and the sine kernel}
\runtitle{Top eigenvalue of the random Toeplitz matrix}

\begin{aug}
\author[A]{\fnms{Arnab} \snm{Sen}\corref{}\thanksref{t1}\ead[label=e1]{arnab@math.umn.edu}}
\and
\author[B]{\fnms{B\'{a}lint} \snm{Vir\'{a}g}\thanksref{t2}\ead[label=e2]{balint@math.toronto.edu}}
\thankstext{t1}{Supported by the EPSRC Grant EP/G055068/1.}
\thankstext{t2}{Supported by the Canada Research Chair program and the NSERC Discovery Accelerator Grant.}
\runauthor{Sen and Vir\'{a}g}
\address[A]{Departments of Mathematics\\
University of Minnesota\\
127 Vincent Hall, 206 Church St. SE\\
Minneapolis, Minnesota 55455\\
USA\\
\printead{e1}}
\address[B]{Departments of Mathematics and Statistics\\
University of Toronto\\
40 St George St.\\
Toronto, Ontario\\
M5S 2E4, Canada\\
\printead{e2}}

\affiliation{University of Minnesota and University of Toronto}
\runauthor{A. Sen and B. Vir\'{a}g}
\end{aug}

% HISTORY:
\received{\smonth{9} \syear{2011}}
\revised{\smonth{3} \syear{2013}}

% ABSTRACT
%
\begin{abstract}
We show that the top eigenvalue of an $n \times n$ random
symmetric Toeplitz matrix, scaled by $\sqrt{2n\log n}$,
converges to the square of the $2 \to4$ operator norm of the sine
kernel.
\end{abstract}

% KEYWORDS
% Pirmas kwd is didziosios raides
%
\begin{keyword}[class=AMS]%[class=MSC]
\kwd[Primary ]{60F25}
\kwd[; secondary ]{60B20}
\end{keyword}

\begin{keyword}
\kwd{Random Toeplitz matrices}
\kwd{maximum eigenvalue}
\kwd{spectral norm}
\kwd{sine kernel}
\end{keyword}

\end{frontmatter}

%s1 #&#
\section{Introduction}

An $n \times n$ symmetric random Toeplitz matrix is given by
\[
\bT_{n} = %
\lleft[\matrix{ a_0 &
a_1 & \cdots& a_{n-2} & a_{n-1}\vspace*{2pt}
\cr
a_1 & a_0 & a_1& \cdots& a_{n-2}
\vspace*{2pt}
\cr
\vdots& a_1 & a_0 & \ddots& \vdots
\vspace*{2pt}
\cr
a_{n-2} & \vdots& \ddots& \ddots& a_1
\vspace*{2pt}
\cr
a_{n-1} & a_{n-2} & \cdots& a_1 &
a_0 } \rright] %
= \bigl((a_{|i - j|})
\bigr)_{ 0 \le i, j \le n},
\]
where $(a_i)_{ 0 \le i \le n-1}$ is a sequence of
independent random variables. This article establishes
the law of large numbers for the maximum
eigenvalue of this matrix as $n\to\infty$. The study
of deterministic Toeplitz operators has a rich theory. See
the classical book by
\citet{Grenander84} or more recent works by \citet{Bottcher00} and B{\"o}ttcher and
Silbermann (\citeyear{Bottcher99,Bottcher06}). In
contrast, the study of random Toeplitz matrices is a
relatively new field of research. The question of
establishing the limiting spectral distribution of random
Toeplitz matrices with independent entries was first posed
in the review paper by \citet{Bai99}. The answer was
given by \citet{Bryc06} using method
of moments. Since then the study of asymptotic distribution
of eigenvalues of Toeplitz matrices has attracted
considerable attention; for example, see \citet{hammond05,Bose08,Kargin09,Chatterjee09} and the references therein.

The problem of studying the maximum eigenvalue of random Toeplitz
matrices is raised in \citet{Bryc06}, Remark~1.3. \citet{Bose07}
established the law of the large numbers for the spectral norm of
Toeplitz matrix when the entries are i.i.d. with some
\textit{positive} mean and finite variance. But pinpointing the exact
limit for the spectral norm of the Toeplitz matrix when there is
no perturbation, that is, when the entries are mean zero, turned
out to be much more challenging. This is partly due to the fact that,
unlike the Wigner case where the limiting spectral distribution,
the semicircular law has a compact support, and the top eigenvalue
converges to the right endpoint of the support [this was proved
by \citet{Bai88}], the limiting spectral distribution of
Toeplitz matrices has infinite support. As a result, there is no
natural guess to begin with. Another difficulty is that currently
there are no useful estimates available for the trace of high
powers of the Toeplitz matrix $\operatorname{tr}(\bT_n^k)$ when $k =
k(n)$ goes to infinity.

\citet{Meckes07} showed that if the entries have zero mean and
uniformly subgaussian tail, then the expected spectral norm of an
$n \times n$ random Toeplitz matrix is of the order of $\sqrt{n
\log n}$, a significant departure from the standard $\sqrt n$
scaling of the Wigner
case.
\citet{Adamczak10} showed concentration, and more precisely, he proved
that the spectral norm of random Toeplitz matrix normalized by the
expected spectral
norm converges almost surely to $1$ if the entries are
i.i.d. with zero mean and finite variance. \citet{Bose10} gave an
upper bound and a lower bound for the right tail
probability of the spectral norm of Toeplitz matrix scaled
by $n^{1/\alpha}$ when the entries are i.i.d. heavy-tailed random
variables satisfying the following condition. There exist $p, q \ge0$
with $p+q=1$ and a slowly varying function $L(x)$ such that
\[
\lim_{x \to\infty} \frac{\prob\{ X > x \}}{\prob\{|X| > x\}} = p,\qquad
\lim_{x \to\infty}
\frac{\prob\{ X \le-x \}}{\prob\{|X| > x\}} = q
\]
and
\[
\prob\bigl\{|X| > x\bigr\} \sim x^{-\alpha} L(x) \qquad\mbox{as } x \to\infty.
\]

%belonging to the domain of attraction of an $\alpha$-stable random
%variable with index $0 < \alpha< 1$.

Throughout the paper, we will have the following standing
assumption on the entries of our random Toeplitz matrix.

\begin{assumption*}
For each $n$, $(a_i)_{ 0 \le i \le
n-1}$ is an array of independent real random
variables (we suppress the dependence on $n$). There exists
constants $\gamma>2$ and $C$ finite so that for each
variable
\[
\E a_i = 0, \qquad\E a_i^2 = 1 \quad\mbox{and}\quad \E
|a_i|^{\gamma}<C.
\]
\end{assumption*}

Define the integral operator corresponding to the sine
kernel by
\[
\Sin(f) (x):= \int_{\mathbb R} \frac{\sin(\pi(x-y))}{ \pi(x-y)}
f(y) \,dy\qquad \mbox{for } f \in L^2(\mathbb R),
\]
and its $2 \to4$ operator norm as
\[
\|\Sin\|_{2 \to4}:= \sup_{\| f \|_2 \le1} \bigl\| \Sin(f)
\bigr\|_4,
\]
where $\|f\|_p: = (\int_{\mathbb R}
|f(x)|^p)^{1/p} $ denotes the standard $L^p$-norm.
$\|\Sin\|^2_{2 \to4} $ is the solution of the convolution optimization
problem (see the \hyperref[app]{Appendix}),
%
%
%e1 #&#
\begin{equation}
\label{eqopti} \sup\bigl\{\|f\star f\|_2 \dvtx f \mbox{ even of
$L^2$-norm $1$ supported on $[-1/2,1/2]$}\bigr\}.
\end{equation}

For a Hermitian matrix $\bA$, we denote by $\lambda_1(\bA)$ and
$\lambda
_n(\bA)$ the maximum and minimum eigenvalue of $\bA$, respectively. The
following theorem is the main result of our paper.

%th1 #&#
\begin{theorem}\label{thmmain}
Let $\bT_n$ be a sequence of $n \times n$ symmetric random Toeplitz
matrix as defined above with its entries satisfying the above
assumption. Then
\[
\frac{\lambda_1( \bT_n) }{ \sqrt{2 n\log n}} \stackrel{L^\gamma}
{\to} \| \mbox{\em\Sin}
\|^2_{2 \to4} = 0.8288\ldots\qquad\mbox{as } n \to\infty.
\]
\end{theorem}
Recall that a sequence of random variables converges in
$L^p$ to a constant $c$, denoted by $X_n
\stackrel{L^p}{\to} c$, if $\E| X_n - c|^p \to0$.
%
%re2 #&#
\begin{remark}
Note that $L^\gamma$ convergence is as best as we can hope for
in Theorem~\ref{thmmain}. This is because of the fact that
maximum eigenvalue of a symmetric matrix dominates the diagonal
entries. So $\lambda_1(\bT_n) \ge a_0$ and $\E
|\lambda_1(\bT_n)|^p$ can be infinite for any $p> \gamma$. Thus we
cannot expect $L^p$ convergence for $p> \gamma$.
\end{remark}

By symmetry, the same theorem holds for $-\lambda_n$ and so
for the spectral norm $\| \bT_n\|_\spec= \max( \lambda_1,
-\lambda_n )$ as well.

%s1.1 #&#
\subsection{Connection between Toeplitz and circulant matrices}\label{sstoepcirc}
The starting point our analysis of the maximum eigenvalue is a
connection between a Toeplitz matrix and a circulant matrix twice
its size.

Observe that $\bT_n$ is the $n \times n$ principal submatrix of a
$2n \times2n$ circulant matrix $\bC_{2n}= (b_{j-i \mathrm{ mod
} 2n})_{0 \le i, j \le2n-1}$, where $b_j= a_j$ for $ 0 \le j <
n$ and $b_j = a_{2n -j}$ for $n< j < 2n$ (choice of $b_n$ is irrelevant
at this point and it will be set later). We hope to relate the
spectrum of Toeplitz matrix to that of the present circulant
matrix twice its size, which can be easily diagonalized as follows:
\[
(2n)^{-1/2}\bC_{2n} = \bU_{2n} \bD^{\dagger}_{2n}
\bU_{2n}^*,
\]
where $\bU_{2n}$ is the discrete Fourier transform, that is,
a unitary matrix given by
\[
\bU_{2n}(j, k) = \frac{1}{\sqrt{2n}}\exp\biggl( \frac{2 \pi
ijk}{2n}
\biggr),\qquad  0 \le j, k \le2n-1,
\]
and $\bD^{\dagger}_{2n}$ is a diagonal matrix with
%
%
%e2 #&#
\begin{eqnarray}
 {\bigl(\bD^{\dagger}_{2n}
\bigr)}_{j,j}&=& \frac{1}{\sqrt{2n}} \sum_{k=0}^{2n-1}
b_k \exp\biggl( \frac{2 \pi\ci jk}{2n} \biggr)
\nonumber
\\[-8pt]
\\[-8pt]
\nonumber
& =& \frac{1}{\sqrt{2n}}
\Biggl[ a_0 + (-1)^jb_n + 2\sum
_{k=1}^{n-1} a_k \cos\biggl(
\frac{2
\pi jk}{2n} \biggr) \Biggr].
\end{eqnarray}
Clearly the $j$ and $2n-j$ entries of $\bD^{\dagger}_{2n}$ agree for
all $n < j < 2n$.
If we write
\[
\bQ_{2n} = %
\pmatrix{ \bI_{n} &
\bzero_n\vspace*{2pt}
\cr
\bzero_n & \bzero_n }
,
\]
then $\bT_n$ and $\bQ_{2n} \bC_{2n} \bQ_{2n} $ have the same nonzero
eigenvalues by our observation. Moreover, the matrix $(2n)^{-1/2}
\bQ_{2n} \bC_{2n} \bQ_{2n} $ has the same eigenvalues as its
conjugate
\[
(2n)^{-1/2} \bU^*_{2n}\bQ_{2n} \bC_{2n}
\bQ_{2n}\bU_{2n}= \bP_{2n} \bD^{\dagger}_{2n}
\bP_{2n},
\]
where
%
%
%e3 #&#
\begin{equation}
\label{pdef} \bP_{2n}:= \bU^*_{2n} \bQ_{2n}
\bU_{2n}.
\end{equation}
Consequently, we have a useful
representation of the maximum eigenvalue of the Toeplitz
matrix
%
%
%e4 #&#
\begin{equation}
\lambda_1\bigl(n^{-1/2}\bT_n\bigr) = \sqrt{2}
\lambda_1\bigl(\bP_{2n} \bD^{\dagger
}_{2n}
\bP_{2n}\bigr) \label{eqtoepcirc}
\end{equation}
as long as the right-hand side is not zero. We point out here
that the matrix $\bQ_{2n} \bC_{2n} \bQ_{2n} $ (and so $\bP_{2n}
\bD^{\dagger}_{2n} \bP_{2n}$) does not depend on the value of
$b_n$, so we may replace it with an independent copy $a_n$ of $a_0$. In
addition, as we will show in Lemma~\ref{lemdiagchange} we can replace
$a_0$ by $\sqrt{2} a_0$ in $D_{2n}$ without changing the asymptotics.
Dropping the subscript $2n$, we will study the matrix
$\bP\bD\bP$ as before and the entries of the diagonal matrix $ \bD=
\operatorname{diag}(d_0, d_1, \ldots, d_{2n-1} )$ given by
\[
d_j = \frac{1}{\sqrt{2n}} \Biggl[ \sqrt{2} a_0 +
(-1)^j\sqrt{2} a_n + 2\sum_{k=1}^{n-1}
a_k \cos\biggl( \frac{2 \pi jk}{2n} \biggr) \Biggr],\qquad 0 \le j < 2n.
\]
The reason for choosing the ``right'' variance for diagonal and the
``right'' auxiliary variable $b_n$ is that now the variables $d_j, 0
\le j \le n$ become uncorrelated; see Lemma~\ref{lemdjmeancov}. Thus
in the special case when $\{a_j \dvtx0 \le j \le n\} $ are i.i.d.
Gaussian random variables with mean $0$ and variance $1$, it follows
that $\{ d_j\dvtx0 \le j \le n\}$ are again independent Gaussian with mean
$0$ and have variance $1$ except for $d_0$ and~$d_n$, which have
variance $2$.

We have thus reduced our problem to studying the maximum
eigenvalue of the matrix $\bP\bD\bP$ where
$\bP$ is a deterministic Hermitian projection operator, and
$\bD$ is a random multiplication operator with defined
on $\mathbb C^{2n}$ with uncorrelated entries.
One can view this representation as a discrete
analogue for Toeplitz operators defined on the Hardy space
$\mathcal H^2$.

%But it is not clear how one can
%take limit so that the finite dimensional operators
%$\bP$ and $\bD^{\dagger}$ would converge to an appropriate
%infinite dimensional projection operator and a random
%multiplication operator respectively and moreover, the
%spectral norm would be continuous with respect to such limit.

%The spectral properties of both $\bP$ and $\bD^{\dagger}$
%are easy to analyze in isolation but the analysis of
%operator $\bP\bD^{\dagger} \bP$ is more challenging
%because the operators $\bP$ and $\bD^{\dagger}$ do not
%commute.
%
In the general case, to analyze the lower bound, we need to use an
invariance principle.
The difficulty is that the top eigenvalue does not come from the usual
central limit theorem regime of $\bD$, but from moderate deviations. We
overcome this by extending the invariance principle of \citet
{chatterjee05} (based on Lindeberg's approach to the CLT) to the realm
of moderate deviations.

%However, this is not true for general Toeplitz
%matrix, a substantial technical difficulty in proving the
%lower bound in Theorem~\ref{thmmain}.

%s1.2 #&#
\subsection{Heuristics and conjectures}\label{sshc}

We first give a heuristic description of the origin of the limiting
constant. Since $\bP$ is a convolution with a decaying function,
the matrix $\bP\bD\bP$ in many ways behaves
a like the diagonal matrix $\bD$ itself. In particular, its top
eigenvalue comes
from a few nearby extreme values of $\bD$. We can partition of the interval
into
short enough sections $J$, and
show that the top eigenvalue is close to the maximum top eigenvalue of
blocks $\bP[J]\bD[J]\bP[J]$. This likes to be
large when the entries of $\bD[J]$ are large. By large deviations
theory, the real constraint on the best $\bD[J]$ is that the $\ell
^2$-norm the vector of entries
is bounded by $\sqrt{2 \log n}$.

Now the top eigenvalue
is an $\ell^2$ optimization problem, but now we have another
$\ell^2$ optimization over the entries of $\bD[J]$. Thus an $\ell
^4$-norm appears, and that the solution
of these two problems together are asymptotically given by
the $2\to4$ norm of the limiting operator of $\bP_{2n}$, which we
then relate to the well-known sine kernel. The relation is
natural, since $\bP_{2n}$ and the sine kernel are both projections to
an interval conjugated by a Fourier transform.

Before we prove Theorem~\ref{thmmain}, let us state some conjectures
and open
questions. Each of these conjectures
can be split into parts (a) the Gaussian case, and (b) the general case
where suitable moment conditions have to be imposed.

%co1 #&#
\begin{conjecture}
Let $v_n$ be the top eigenvector of $\bP\bD^{\dagger} \bP$. Then there
exist random integers $K_n$ so that for each $i\in\mathbb Z$,
we have $v_n(K_n+i) \to\hat g(i)$, where $\hat g$ is the Fourier
transform of the function $g(x) = \sqrt2 f( 2x -1/2)$, and f is the
(unique) optimizer in \eqref{eqopti}.
\end{conjecture}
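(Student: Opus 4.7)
The plan is to combine a localization argument for the top eigenvector $v_n$ with a variational analysis of the limiting $\ell^2\to\ell^4$ problem. Since $\bP$ is a convolution on $\mathbb Z/2n\mathbb Z$, the shifted eigenvector $\tilde v_n(i):=v_n(K_n+i)$ satisfies the translation-invariant equation
\[\lambda_1(\bP\bD^\dagger\bP)\,\tilde v_n(i)=\sum_\ell \bP(\ell-i)\,d_{K_n+\ell}\,\tilde v_n(\ell),\]
and the kernel $\bP(m)$ converges pointwise as $n\to\infty$ to a kernel $\bP_\infty$ on $\mathbb Z$ given by $\bP_\infty(0)=1/2$, $\bP_\infty(m)=\ci/(\pi m)$ for $m$ odd, and $\bP_\infty(m)=0$ for $m$ even and nonzero. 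A direct Fourier computation identifies $\bP_\infty$ as the projection on $\ell^2(\mathbb Z)$ onto sequences with Fourier support in $[0,1/2]\subset[0,1]$. I take $K_n:=\arg\max_j|v_n(j)|$, with an overall phase chosen so that $v_n(K_n)\ge 0$.

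The main technical step is \textbf{localization}: for every $\varepsilon>0$ there should exist $W=W(\varepsilon)$ so that $\sum_{|i|>W}|v_n(K_n+i)|^2<\varepsilon$ with probability tending to one. I would combine two ingredients: the eigenvalue recursion above (whose kernel has only $O(1/|m|)$ decay) to obtain pointwise tail bounds on $\tilde v_n$, and a splitting argument $v_n=v_n^{\text{near}}+v_n^{\text{far}}$ which uses the sharp constant $\|\Sin\|_{2\to 4}^2$ from Theorem~\ref{thm:main} to rule out nontrivial mass in $v_n^{\text{far}}$ (otherwise the far piece would furnish a competing near-optimal localized bandlimited vector, contradicting uniqueness of the optimal window at moderate-deviation scale). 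Tightness of $\{\tilde v_n\}$ in $\ell^2(\mathbb Z)$ then follows, and one extracts a subsequential limit $\tilde v_\infty$. Passing to the limit in the shifted eigenvalue equation after dividing by $\sqrt{\log n}$, and invoking the moderate-deviation invariance principle of the lower-bound proof to characterize the limit profile of $d_{K_n+\cdot}/\sqrt{\log n}$, yields the Euler--Lagrange equation
\[\bP_\infty\bigl(|\tilde v_\infty|^2\tilde v_\infty\bigr)=\mu\,\tilde v_\infty,\qquad \mu=\|\Sin\|_{2\to 4}^2,\]
so $\tilde v_\infty$ is an optimizer of the $\ell^2\to\ell^4$ problem for $\bP_\infty$.

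Finally, identify $\tilde v_\infty=\hat g$. Via Parseval on $\ell^2(\mathbb Z)\leftrightarrow L^2([0,1])$, maximizing $\|v\|_{\ell^4}^2/\|v\|_{\ell^2}$ over $\bP_\infty$-bandlimited $v$ is isomorphic, through the change of variables $g(x)=\sqrt 2\,f(2x-1/2)$, to the convolution problem~\eqref{eq:opti} for the even function $f$ on $[-1/2,1/2]$. Hence $\tilde v_\infty$ coincides, up to phase, with $\hat g$ where $g$ is the image of the unique optimizer $f$; the phase normalization at $K_n$ fixes the representative, and uniqueness of the subsequential limit promotes the convergence $v_n(K_n+i)\to\hat g(i)$ to the full sequence. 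The main obstacle is the localization step: proving $\ell^2$-tightness of $\tilde v_n$ against a kernel that decays only like $1/|m|$ and a random potential with many secondary peaks capable, a priori, of carrying stray eigenvector mass.
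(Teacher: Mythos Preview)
The statement you are attempting to prove is labeled \emph{Conjecture} in the paper and appears in the subsection ``Heuristics and conjectures''; the paper offers no proof for it. There is therefore nothing to compare your proposal against: the authors explicitly list this as an open problem, alongside Conjectures~2 through~6 and Question~3.

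Your outline is a reasonable strategy, and indeed the pieces you invoke (the block-diagonal reduction, the convergence $\bP_{2n}\to\Pi$, the invariance principle for moderate deviations, the identification of the $2\to4$ problem with the convolution problem \eqref{eq:opti}, and the uniqueness of the optimizer from \cite{Garsia68}) are all present in the paper. But you correctly identify the essential gap yourself: the localization step. The kernel $\bP$ has only $1/|m|$ off-diagonal decay, and the random diagonal $\bD^\dagger$ has many near-extreme entries at moderate-deviation scale scattered over $\{0,\ldots,2n-1\}$; nothing in the paper (nor in your sketch) rules out that the top eigenvector spreads a nontrivial fraction of its $\ell^2$-mass over several such sites. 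Your proposed ``splitting plus uniqueness'' argument is not a proof: even granting uniqueness of the optimizer $f$, two well-separated near-optimal windows could in principle share the eigenvector mass without either alone attaining the optimum. A second gap is the passage to the Euler--Lagrange equation: you need that the recentered profile $d_{K_n+\cdot}/\sqrt{2\log n}$ converges (along the subsequence) to a \emph{deterministic} limit equal to the optimal $\delta$-profile, not merely that it is tight, and the paper's lower-bound argument only exhibits \emph{some} block where the profile is close to a prescribed $\bu$, not that the block containing $K_n$ has this property.

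In short: the paper does not prove this statement, and your proposal, while a sensible roadmap, leaves open precisely the hard analytic step (tightness of the eigenvector against a slowly decaying kernel and a random potential with many competing peaks) that makes this a conjecture rather than a theorem.
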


%co2 #&#
\begin{conjecture}
With high probability, all eigenvectors of $\bP\bD^{\dagger} \bP$ are
localized: for each eigenvector, there exists a set of size $n^{o(1)}$
that supports $1-o(1)$ proportion
of the $\ell^2$-norm.
\end{conjecture}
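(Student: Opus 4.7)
The plan is to run a multi-scale analysis (MSA) in the spirit of random Schr\"odinger operators. The paper's own heuristic already explains why the \emph{top} eigenvector concentrates on a few sites near $\mathrm{argmax}_j |d_j|$; the conjecture asks for the same statement uniformly across the entire spectrum. The key analytic input to produce is an effective exponential decay of the Green's function of $\bP\bD^{\dagger}\bP$ on every mesoscopic energy window, starting from only the polynomial decay $\bP(i,j)\sim (-1)^{i-j}/(\pi(i-j))$ of the kernel of $\bP$.

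Concretely, fix a scale $L=L(n)$ and partition $\{0,1,\ldots,2n-1\}$ into contiguous intervals $J_1,\ldots,J_m$ of length $L$, so that
\[ \bP\bD^{\dagger}\bP \;=\; A^{\mathrm{diag}} + A^{\mathrm{off}}, \qquad A^{\mathrm{diag}} \;=\; \bigoplus_{k} \bP[J_k]\,\bD^{\dagger}[J_k]\,\bP[J_k]. \]
Every eigenvector of $A^{\mathrm{diag}}$ is supported on a single $J_k$. Iterate with scales $L_0=n$ and $L_{t+1}=L_t^{1/2}$, so that after $T=O(\log\log n)$ levels one reaches $L_T=n^{o(1)}$. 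At each level, use Davis--Kahan to transfer eigenvectors from the coarser to the finer decomposition, provided one has (a) a bound on the operator norm of $A^{\mathrm{off}}$ restricted to the relevant spectral window of width $\eta_t$, and (b) a Wegner-type lower bound on local eigenvalue gaps.

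For (a), first handle the Gaussian case: there the $d_j$'s are i.i.d.\ Gaussians thanks to the orthogonality noted in the paper, so the oscillating signs in $\bP(i,j)$ combine with independence of the diagonal blocks of $\bD^{\dagger}$ to upgrade the pointwise $1/|i-j|$ decay to an operator norm bound of the form $L^{-c}$ inside each narrow spectral window. The general case follows by the same invariance principle used for Theorem~\ref{thm:main}. For (b), prove a Wegner estimate $\prob(\bP\bD^{\dagger}\bP\text{ has an eigenvalue in } I)\le n^{o(1)}|I|$; each $d_j$ is a linear combination of many independent $a_k$'s and therefore has a smooth density by Fourier-analytic arguments, which should feed into a Stollmann-style bound for the quadratic form $\bP\bD^{\dagger}\bP$.

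The hard part will be the borderline decay of $\bP$: pure entrywise decay of order $1/|i-j|$ is not enough on its own to drive an MSA to exponential Green's function decay, so one is forced to exploit the oscillations in $\bP$ together with the randomness of $\bD^{\dagger}$ simultaneously, and show that together they produce effective short-range behavior inside each spectral window. A secondary but serious difficulty is pushing the Wegner estimate down to scale $\eta=n^{-1+o(1)}$ for a non-Wigner, non-Schr\"odinger model; there is no direct precedent, and this is presumably why the statement is left open.
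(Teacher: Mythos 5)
The statement you were asked about is a \emph{conjecture} in the paper, explicitly left open by the authors: the paper offers no proof of it, only the surrounding heuristic (and the machinery of Theorem~\ref{thm:main}) that the \emph{top} eigenvector concentrates near a small cluster of large $|d_j|$'s. There is therefore no paper proof to compare against; you were in effect asked to resolve an open problem, and you have not done so --- your ``proposal'' is a research program, and you are candid about that at the end.

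As a program it is sensible in outline, but the gaps you flag are not mere technicalities, and there are a couple more worth naming. First, step (a) claims that the oscillating sign of $\bP(i,j)\sim 1/(i-j)$ together with the randomness of $\bD^\dagger$ upgrades the off-block-diagonal operator norm to $L^{-c}$ inside a narrow spectral window. The only rigorous estimate of this flavor in the paper, $\|(\bP-\bB)\bR\|_\spec = O((\log n)^{-1/2})$ inside Lemma~\ref{lem:block_form}, relies entirely on the sparsity of $\bR$ (at most $M=O(1)$ surviving indices per block after thresholding at $\eps\sqrt{2\log n}$), and not at all on oscillatory cancellation. For a generic bulk spectral window $\bD^\dagger$ is dense, the threshold mechanism is unavailable, and it is genuinely unclear whether the off-diagonal piece is small in operator norm; $1/|i-j|$ is exactly the critical hopping decay for one-dimensional localization, so there is no soft a priori reason for an MSA induction to close. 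Second, your Wegner step assumes each $d_j$ has a smooth density. In the general case the $d_j$'s are only uncorrelated, not independent (Lemma~\ref{lem:d_jmeancov}), and for, say, Rademacher $a_k$'s no individual $d_j$ has a density at all; Fourier anti-concentration gives much less than the Stollmann-type bound you want, and you would need it at scale $n^{-1+o(1)}$ for a long-range, non-sign-definite quadratic form, with uniformity across $O(\log\log n)$ Davis--Kahan steps. So the proposal correctly identifies the obstacles but does not supply the key ideas needed to overcome them, and the conjecture remains open.
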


%qu3 #&#
\begin{question}
What is the behavior of $n^{-1/2} \lambda_1(\bT_n)-\sqrt{2\log n}\|
\mbox
{\em\Sin}\|^2_{2 \to4}$?
\end{question}

%co4 #&#
\begin{conjecture}
The top of the spectrum of $\bT_n$, suitably shifted, and normalized,
converges to a Poisson process with intensity $c e^{-\eta x}$ for some
$c,\eta>0$.
\end{conjecture}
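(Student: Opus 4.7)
The plan is to upgrade Theorem~\ref{thm:main} to joint convergence of the top $k$ eigenvalues of $\bT_n$ for every fixed $k$, and then identify the limit by pushing classical Gaussian extreme-value theory through a local linearization of the map from $\bD^{\dagger}$ to the spectrum. By the identity \eqref{eq:toep_circ}, applied to the top-$k$ nonzero eigenvalues of $\bQ \bC \bQ$ (which sit far above its large null space), the task reduces to the joint point process of top eigenvalues of $\bP \bD^{\dagger} \bP$.

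The first and most delicate step is a multi-window localization: with high probability there exist random disjoint intervals $J_1, \ldots, J_k \subset \{0, \ldots, 2n-1\}$, each of length $n^{o(1)}$ and pairwise distance $n^{1-o(1)}$, such that the $r$-th largest eigenvalue of $\bP \bD^{\dagger} \bP$ is approximated to the relevant precision by $\lambda_1(\bP[J_r] \bD^{\dagger}[J_r] \bP[J_r])$. The windows are centered at the $k$ largest local extrema of $|d_j|$. The proof must combine the rapid off-diagonal decay of the kernel $\bP$ (so that cross-window interactions contribute only lower-order corrections) with a moderate-deviation bound ruling out ``runner-up'' extrema close in value but far in location that could cause pairs of eigenvalues to hybridize rather than fluctuate independently.

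Once the windows are isolated, each block's top eigenvalue should depend smoothly on the local $d_j$-profile via the continuum operator \Sin{} and the optimization \eqref{eq:opti}, with a deterministic and nontrivial linear slope $\kappa$ at the leading extreme configuration. On the other hand, applying classical extreme value theory to the near-i.i.d.\ Gaussian sequence $\{d_j\}$ -- after extending the paper's moderate-deviation Lindeberg-type invariance principle jointly to $k$ well-separated extremes, which I expect to be routine by swapping windows in place of single coordinates -- gives that the shifted and rescaled point process of top $|d_j|$ converges to a Poisson process with intensity $e^{-x}\,dx$. Pushing this forward through the linearization by the continuous mapping theorem yields a Poisson limit for the top of the spectrum of $\bT_n$ with intensity $c e^{-\eta x}\,dx$, for constants $c, \eta > 0$ determined by $\|\Sin\|^2_{2\to 4}$, $\kappa$, and the Gaussian centering/scale at $N = 2n$.

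The principal obstacle is the sharp multi-window localization. The localization conjecture stated above in the paper is needed at a quantitative precision strong enough to resolve consecutive Poisson points (i.e., at the Gumbel scale $1/\sqrt{\log n}$ of the spacings between top extremes), and this appears genuinely hard. A plausible route is a Green-function/Mourre estimate at the spectral edge of $\bP \bD^{\dagger} \bP$, together with a chaining argument over candidate windows; but the matrix structure -- randomness in the diagonal and no useful translation invariance -- seems to rule out a direct adaptation of standard localization techniques from random Schr\"odinger operators or Wigner matrices, so new ideas are probably required here.
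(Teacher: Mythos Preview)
The statement you are attempting to prove is Conjecture~4 in the paper, and the paper does \emph{not} prove it: it is listed explicitly among the open problems in Section~1.2, with no proof or even a sketch given. There is therefore nothing in the paper to compare your proposal against.

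As for the proposal itself, you have written an honest outline rather than a proof, and you correctly identify the main gap yourself. The paper's block-diagonal reduction (Lemma~\ref{lem:block_form}) controls errors only up to $O(1)$ on the scale of $\lambda_1(\bP\bD\bP)$, i.e.\ at the law-of-large-numbers scale $\sqrt{\log n}$; your program requires control at the Gumbel scale, which is finer by a factor of order $\log n$. Nothing in the paper gives that. Two further points are not addressed: (i) you assert a ``deterministic and nontrivial linear slope $\kappa$'' for the map from the local $d_j$-profile to the block top eigenvalue near the optimal configuration, but this linearization is not established anywhere and would need the optimizer of \eqref{eq:opti} to be nondegenerate in a suitable sense; (ii) you need that with high probability the top $k$ eigenvalues come from $k$ \emph{distinct} windows, which in particular requires ruling out that the second eigenvalue of a single block competes with the first eigenvalue of another block at the Gumbel scale. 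Until the quantitative localization is available, the proposal remains a heuristic in the same spirit as the paper's own discussion preceding the conjectures.
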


Related to this, we have the following.

%co5 #&#
\begin{conjecture}
The top eigenvalue of $\bT_n$, suitably shifted and normalized,
has a limiting Gumbel distribution.
\end{conjecture}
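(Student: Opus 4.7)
My approach is to derive the Gumbel statement as a consequence of the preceding Poisson conjecture for the top of the spectrum: a standard computation shows that if a point process on $\mathbb R$ is Poisson with intensity $c e^{-\eta x}\,dx$, its maximum has distribution $\exp(-(c/\eta) e^{-\eta x})$, which is Gumbel up to an affine change of variable. So the real task is to establish the Poisson limit. I would proceed along the heuristic sketched in Section~1.3, working with $\lambda_1(\bP\bD^\dagger\bP)$ via \eqref{eq:toep_circ}. Partition $\{0,\ldots,2n-1\}$ into blocks $J_1,\ldots,J_M$ of length $\ell=\ell(n)$, with $\ell$ chosen large enough that each localized problem $\bP[J_i]\bD^\dagger[J_i]\bP[J_i]$ already feels the variational problem \eqref{eq:opti}, but small enough that different blocks are essentially decoupled by the off-diagonal decay of $\bP$. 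Set $L_i := \lambda_1(\bP[J_i]\bD^\dagger[J_i]\bP[J_i])$ and reduce, using the expected localization of the top eigenvector, to the claim that $\lambda_1(\bP\bD^\dagger\bP)=\max_i L_i + o(\text{Gumbel scale})$.

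The analytic heart is a sharp single-block tail estimate of the form
\begin{equation*}
\mathbb P(L_i>u)\;=\;h(u)\,\exp\!\bigl(-u^2/(2\|\text{\Sin}\|_{2\to4}^4)\bigr)(1+o(1))
\end{equation*}
uniformly in the moderate-deviation window $u\asymp\sqrt{\log n}$. The exponent is essentially the rate function already computed in the proof of Theorem~\ref{thm:main}: for Gaussian $\bD^\dagger[J]$ (which is the correct approximation after the truncation/invariance arguments of Section~\ref{sec:trunc}), one has $\inf\{\|d\|_2^2/2:\sup_v\langle d,|\bP[J]v|^2\rangle\ge u\}=u^2/(2\|\bP[J]\|_{2\to4}^4)$, which converges to $u^2/(2\|\text{\Sin}\|_{2\to4}^4)$ as $\ell\to\infty$. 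The prefactor $h(u)$ should come from a Laplace/steepest-descent expansion around the optimizing pair $(d,v)$ in the rate-function problem, accounting for the flat directions (translation inside the block, phase) together with the Hessian of the Lagrangian of \eqref{eq:opti} at its optimizer $f$. In the non-Gaussian case one would need to upgrade the moderate-deviation invariance principle of this paper from the level of exponential rates to the level of exact prefactors.

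With the sharp tail in hand, choose $u_0=\sqrt{\log n}\,\|\text{\Sin}\|_{2\to4}^2$ and $\gamma=u_0/\|\text{\Sin}\|_{2\to4}^4$; writing $u=u_0+y/\gamma$ converts the Gaussian-type tail into a pure exponential in $y$, yielding the conjectured intensity $c e^{-\eta y}$ for some explicit $c,\eta>0$. Poisson convergence of the rescaled point process $\sum_i\delta_{\gamma(L_i-u_0)}$ then follows from a Chen--Stein / second-moment argument requiring: (a) near-independence of $\bD^\dagger[J_i]$ and $\bD^\dagger[J_j]$ for $|i-j|$ large, which is available from the covariance structure of $\bD^\dagger$ in Section~\ref{sec:trunc} combined with the invariance principle; and (b) negligibility of the expected number of close pairs of large $L_i$'s, a direct consequence of the same sharp tail applied to pairs.

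The main obstacle I foresee is the subleading prefactor $h(u)$. The $L^\gamma$ convergence of Theorem~\ref{thm:main} only needs the leading exponential rate, whereas the Gumbel shift and scale, and in particular the constants $c,\eta$, depend on a Hessian analysis around an optimizer $f$ of \eqref{eq:opti} that is not known in closed form, plus a careful accounting of the translation and other symmetries of the block problem. Transporting these subleading corrections uniformly through the moderate-deviation invariance principle in the non-Gaussian case is the technical bottleneck, and is where essentially all the work of a full proof would lie.
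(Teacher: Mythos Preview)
The paper does not prove this statement: it is explicitly stated as a \emph{conjecture} (alongside the Poisson conjecture for the top of the spectrum) and left open. So there is no ``paper's own proof'' to compare against.

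Your write-up is accordingly not a proof but a heuristic plan, and you essentially acknowledge this in your final paragraph. The strategy you outline --- reduce Gumbel to the Poisson conjecture, then try to prove Poisson via block decomposition, a sharp moderate-deviation tail for each block, and a Chen--Stein argument --- is a reasonable blueprint and is consistent with the heuristics the authors describe in Section~1.2. But the core step, namely the single-block tail estimate with the correct subexponential prefactor $h(u)$, goes well beyond anything established in the paper: Theorem~\ref{thm:main} and the machinery of Sections~\ref{sec:block}--\ref{s:lower} only pin down the leading exponential rate, not the prefactor, and the invariance principle of Lemma~\ref{lem:invariance} is far too crude (third-derivative error bounds) to transport prefactor-level information from the Gaussian case to the general case. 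You correctly identify this as the bottleneck; but that means what you have written is a programme, not a proof, and the conjecture remains open exactly as the paper leaves it.
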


%co6 #&#
\begin{conjecture}
The eigenvalue process of $\bT_n$, away from the
edge, after suitable normalization, converges to a standard Poisson
point process on $\mathbb R$.
\end{conjecture}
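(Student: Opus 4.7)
My strategy is to reduce the bulk eigenvalue process to a local analysis via the circulant reduction $\lambda_i(n^{-1/2}\bT_n) = \sqrt{2}\,\lambda_i(\bP \bD^{\dagger} \bP)$ of Section 1, and then to harvest asymptotic independence from the (nearly independent) entries of $\bD^{\dagger}$. First I would establish a bulk analogue of the localization conjecture stated just above: for any fixed $\lambda_0$ in the bulk of the limiting spectral measure and any eigenvalue of $\bP \bD^{\dagger} \bP$ within $O(1/n)$ of $\lambda_0$, the corresponding eigenvector is concentrated \bwhp{} on a random subset of $\{0,\dots,2n-1\}$ of size $n^{o(1)}$. The heuristic mirrors the one used for the top eigenvalue: the convolution kernel $\bP$ decays, so an eigenvalue at level $\lambda$ sees effectively only nearby entries of $\bD^{\dagger}$, and the local resonance condition at $\lambda$ pins the eigenvector to a small cluster of indices.

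Assuming localization, I would partition $\{0,\dots,2n-1\}$ into $n/L$ blocks $J_k$ of length $L = n^{\alpha}$ for some small $\alpha > 0$ and show, via a Schur-complement truncation that exploits the off-diagonal decay of $\bP$, that the spectrum of $\bP \bD^{\dagger} \bP$ in an $O(1/n)$ window around $\lambda_0$ is $o(1/n)$-close to the union of the spectra of the principal block matrices $\bP[J_k] \bD^{\dagger}[J_k] \bP[J_k]$ inside that window. Each block then independently contributes an eigenvalue to a rescaled window $[a,b]/n$ around $\lambda_0$ with probability $p_n \asymp (b-a)L/n$ determined by the local density of $\bD^{\dagger}$ and the block size.

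Poisson convergence would then follow from the factorial moment (or Chen--Stein) criterion. The key input is that the diagonal entries of $\bD^{\dagger}$ at indices in different blocks are asymptotically independent (as they are in the top-eigenvalue proof, through the Fourier cancellation and the moderate-deviations invariance principle mentioned in the introduction), so the events ``block $J_k$ contributes an eigenvalue to the window'' are asymptotically independent Bernoullis. Rescaling by the local density of the limiting spectral measure of $\bT_n/\sqrt{n}$ at $\lambda_0$ normalizes the expected count of such blocks to $b-a$, yielding a \emph{standard} Poisson process in the limit.

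The main obstacle is the bulk localization step: establishing localization for structured random matrices of this type is notoriously hard and is itself stated as an open conjecture in the excerpt. A secondary difficulty is proving a sufficiently strong local law for $\bT_n$ on the scale $1/n$, needed both to identify the local spectral density (and hence the correct normalization producing a standard Poisson process) and to rule out accidental coincidences of eigenvalues on scales finer than $1/n$ which would spoil the simplicity of the limiting point process.
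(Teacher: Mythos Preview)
The statement you are attempting to prove is a \emph{conjecture} in the paper, not a theorem: it appears in the ``Heuristics and conjectures'' subsection alongside Conjectures 1--5, and the paper offers no proof of it whatsoever. So there is no paper proof to compare your proposal against.

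That said, your plan is not a proof but a heuristic outline, and you correctly identify its main gap yourself: the bulk localization step is precisely Conjecture~2 of the paper, which is open. Your argument is therefore conditional on another unproved conjecture. Beyond that, several of the steps you sketch would require substantial new work even granting localization. The block decomposition you propose is modeled on Section~3 of the paper, but there the blocks arise from the sparsity of $\bD^\eps$ (only $O(\log n)$ entries survive thresholding), which is what makes the off-diagonal truncation of $\bP$ negligible; in the bulk there is no sparsity, so a Schur-complement truncation at scale $L=n^\alpha$ would have to control errors of a very different nature. Similarly, the asymptotic independence of the $d_j$'s across blocks that you invoke is established in the paper only in the moderate-deviations regime relevant to the edge (Lemma~\ref{lem:d_jmeancov} gives exact decorrelation, but for non-Gaussian entries that is not independence, and the invariance principle of Section~7 is tailored to rare events, not to bulk statistics on scale $1/n$). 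Finally, the paper does not establish any local law for $\bT_n$ at scale $1/n$, so the identification of the normalizing density is also open. In short, your outline is a reasonable heuristic for \emph{why} one might believe the conjecture, but it is far from a proof, and the paper makes no claim to have one.
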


%s1.3 #&#
\subsection{\texorpdfstring{The proof of Theorem \protect\ref{thmmain}}
{The proof of Theorem 1}}

In this section, we break the proof of Theorem~\ref{thmmain} into its
components. This also serves as
a guide to the rest of the paper.

\begin{pf*}{Proof of Theorem~\ref{thmmain}}
Consider the discrete Fourier transform matrix,
\[
\bU(j, k) = \frac{1}{\sqrt{2n}}\exp\biggl( \frac{2 \pi ijk}{2n}
\biggr),\qquad 0 \le j, k
\le2n-1
\]
and $\bD^{\dagger}$, a $2n\times2n$ diagonal matrix so that for
$j=0,\ldots, 2n-1$
%
%
%e5 #&#
\begin{eqnarray}
\label{djformula} \bD^{\dagger}(j,j)&=& \frac{1}{\sqrt{2n}} \sum
_{k=0}^{2n-1} b_k \exp\biggl(
\frac{2 \pi\ci jk}{2n} \biggr)
\nonumber
\\[-8pt]
\\[-8pt]
\nonumber
&=& \frac{1}{\sqrt{2n}} \Biggl[ a_0 +
(-1)^jb_n + 2\sum_{k=1}^{n-1}
a_k \cos\biggl( \frac{2 \pi
jk}{2n} \biggr) \Biggr].
\end{eqnarray}
Let
\[
\bQ= %
\pmatrix{ \bI_{n} & \bzero_n
\vspace*{2pt}
\cr
\bzero_n & \bzero_n} %
, \qquad \bP:=
\bU^*\bQ\bU.
\]
Then, as argued in Section~\ref{sstoepcirc} we have
the representation \eqref{eqtoepcirc}
\[
\lambda_1\bigl(n^{-1/2}\bT_n\bigr) = \sqrt{2}
\lambda_1\bigl(\bP\bD^{\dagger} \bP\bigr)
\]
as long as the right-hand side is positive.
In formula \eqref{djformula} for diagonal entries of $\bD^{\dagger}$,
we replace $a_0$ by $\sqrt2 a_0$ (this is legal via Lemma~\ref
{lemdiagchange}) and choose $b_n = \sqrt2 a_n$ where $a_n$ is an
identical copy of $a_0$ independent of $(a_i)_{0 \le i < n}$ (recall
$\bP\bD^{\dagger}\bP$ does not depend on $b_n$) to obtain a new
diagonal matrix $ \bD= \operatorname{diag}(d_0, d_1, \ldots,
d_{2n-1} )$
given by
\[
d_j = \frac{1}{\sqrt{2n}} \Biggl[ \sqrt{2} a_0 +
(-1)^j\sqrt{2} a_n + 2\sum_{k=1}^{n-1}
a_k \cos\biggl( \frac{2 \pi jk}{2n} \biggr) \Biggr], \qquad 0 \le j < 2n.
\]
The reason for choosing the ``right'' variance for diagonal and the
``right'' auxiliary variable $b_n$ is that now the variables $d_j, 0
\le j \le n$ become uncorrelated; see Lemma~\ref{lemdjmeancov}. Thus
in the special case when $\{a_j \dvtx0 \le j \le n\} $ are i.i.d.
Gaussian random variables with mean $0$ and variance $1$, it follows
that $\{ d_j\dvtx0 \le j \le n\}$ are again independent Gaussian with mean
$0$ and have variance $1$ except for $d_0$ and $d_n$ who have variance $2$.

In Lemma~\ref{lemdiagchange} we justify working with $\bP\bD^\dagger
\bP$ by showing that
\[
\mathbb E \bigl|\lambda_1(\bP\bD\bP)-\lambda_1\bigl(\bP
\bD^\dagger\bP\bigr)\bigr|^\gamma= o(\sqrt{\log n}).
\]
In Section~\ref{sectrunc}, Corollary~\ref{cortrunc} we show that we
can assume that the $a_n$ are bounded by $n^{1/\gamma}$. In Lemma~\ref
{lemtightness}, we establish tightness, so it suffices to show
convergence in probability. In Section~\ref{secblock}, equation \eqref
{deps} we introduce a sparse version $\bD^\eps$ of the diagonal matrix
$\bD$, by considering the set
\[
S=\bigl\{0\le j\le2n-1 \dvtx|d_j|\ge\eps\sqrt{2\log n}\bigr\},
\]
and setting $(\bD^\eps)_{jj}=d_j{\mathbf1}_{j\in S}$. Then we show
that the eigenvalues are close,
\[
\bigl\llvert\lambda_1( \bP\bD\bP) - \lambda_1\bigl( \bP
\bD^{\eps} \bP\bigr)\bigr\rrvert\le\eps\sqrt{2\log n}.
\]

The matrix $\bD^\eps$ is sparse enough that the whole question can be
reduced to a block-diagonal version, where the blocks are determined by
a random partition $\Lambda$ of $\{1,\ldots,2n\}$.
This is done in Lemma~\ref{lemblockform}: with high probability,
\[
\Bigl\llvert\lambda_1 \bigl( \bP\bD^{\eps} \bP\bigr) -
\max_{ J\in\Lambda: J
\cap S \ne\varnothing} \lambda_1 \bigl( \bP[J ]
\bD^{\eps} [J] \bP[J ] \bigr)\Bigr\rrvert= O(1),
\]
where $\bP[J]$ refers to the minor of $\bP$ corresponding to the subset
of indices $J$. This is guaranteed by a careful choice of partition
$\Lambda$, which ensures that the blocks $J$ with $J\cap S\neq
\varnothing$ are sufficiently far apart, so that the interaction between
them is negligible. This interaction comes from off-diagonal elements
of the matrix $\bP$, whose entries decay with the distance from the diagonal.

The main idea of the last part of the proof is explained in Section~\ref{sshc}.
We proceed along those lines. In Propositions \ref{proupperbound} and
\ref{prolowerbound} (Sections~\ref{supper}, \ref{slower}) we then
establish the asymptotic upper and lower bounds for the for the block
diagonal version.
Together, they give that
with high probability, there exists $\eps_n\to0$ so that %(xxx I had
%to change this
%and Prop 15)
%
\[
\max_{ J\in\Lambda: J \cap S \ne\varnothing} \lambda_1 \bigl(
\bP[J ] \bD
^{\eps_n} [J] \bP[J ] \bigr) = \sqrt{2\log n}\bigl(\| \Pi
\|_{2 \to4}^2+o(1)\bigr),
\]
where $\Pi$ is the $n\to\infty$ limit of $\bP$ introduced in \eqref
{eqPidef}. Finally, in the \hyperref[app]{Appendix}, Lemma~\ref{lemsinepiconn},
we identify $\| \Pi\|_{2 \to4}^2 = \frac{1}{\sqrt2} \| \mbox{$\Sin
$}\|
_{2 \to4}^2$. This completes the proof of Theorem~\ref{thmmain}.
\end{pf*}

%s1.4 #&#
\subsection{Notation}
We write that a sequence of events $(\ecal_n)_{n \ge1}$
occurs \textit{with high probability} when $\prob\{
\ecal_n\} \to1$. Let $\ell^2(\mathbb C)$ [resp., $\ell^2(\mathbb
R)$] be the space of square summable
sequences of complex (resp., real) numbers indexed by
$\Z$. For square matrix $\mathbf{A}$ and a subset $T$ of
the index set, we denote by $\mathbf{A}[T]$, the principal
submatrix of $\mathbf{A}$ which is obtained by keeping
those rows and columns of $\mathbf{A}$ whose indices
belong to $T$. We consider $n$ as an asymptotic parameter
tending to infinity. We will use the notation $f(n)
=\Omega(g(n))$ or $g(n) =O(f(n))$ to denote the bound $
g(n) \le C f(n)$ for all sufficiently large $n$ and for
some constant $C$. Notation such as $f(n) =\omega(g(n))$
or $g(n) =o(f(n))$ means that $g(n)/f(n) \to0$ as $n \to
\infty$. We write $f(n) = \Theta(g(n))$ if both $ f(n) =
O(g(n))$ and $g(n) = O(f(n))$ hold.

%s2 #&#
\section{Truncation and tightness}\label{sectrunc}

%s2.1 #&#
\subsection{Truncation and changing the diagonal term} Let $n_0$ be
sufficiently large number. For $n \ge n_0$,
define two arrays of truncated random variables~by
\[
\tilde a_i =\tilde a_i^{(n)}=:
a_i \1_{ \{|a_i| \le
n^{1/\gamma} \}} - \E[ a_i \1_{ \{|a_i| \le n^{1/\gamma} \}}
],\qquad 0 \le i \le n-1
\]
and
\[
\bar a_i =\bar a_i^{(n)}:= \operatorname{Var}(
\tilde a_i)^{-1/2} \tilde a_i, \qquad 0 \le i \le
n-1.
\]
Note that to define $\bar a_i^{(n)}$, we need that $\operatorname{Var}(
\tilde a_i^{(n)}) >0$, which holds for sufficiently large $n$.

We sometimes write $\bT_n(a)$ for
$\bT_n$ to emphasize its dependence on the underlying sequence of
random variables $(a_i)_{0 \le i \le n-1}$. Thus $\bT_n(\tilde a)$
and $\bT_n(\bar a)$ denote the Toeplitz matrices built with random
variables $(\tilde a_i)_{0 \le i \le n-1}$ and $(\bar a_i)_{0 \le
i \le n-1}$, respectively.

The next lemma says that the above truncation and the rescaling of the
underlying random variables has a negligible effect in the study of the
maximum eigenvalue of Toeplitz matrix.
%
%le3 #&#
\begin{lemma} \label{lemtrunc}
We have, as $ n \to\infty$,
\begin{longlist}[(a)]
\item[(a)]
\[\vspace*{-6pt}
\frac{\lambda_1(\bT_n (a) ) - \lambda_1(\bT_n(\tilde a) ) }{ \sqrt
{ n
\log n} } \stackrel{L^\gamma} {\to} 0,
\]
\item[(b)]
\[\vspace*{-6pt}
\frac{\lambda_1(\bT_n (\tilde a) ) - \lambda_1(\bT_n(\bar a) ) }{
\sqrt{ n \log n} } \stackrel{L^\gamma} {\to} 0.
\]
\end{longlist}
\end{lemma}
\begin{pf}
Define, for $n
\ge n_0$,
\[
\hat a_i:= a_i \1_{ \{|a_i| \le n^{1/\gamma} \}},\qquad 0 \le i \le n-1.
\]
Recall that for a matrix $\bA$, its spectral norm satisfies
%
%
%e6 #&#
\begin{equation}
\label{eqspecnormbound} \| \bA\|_\spec^2 \le\max
_k \sum_{ l}\bigl|\bA(k, l)\bigr| \times
\max_l \sum_{ k}\bigl|\bA(k, l)\bigr|.
\end{equation}
In the special case when $\bA$ is Hermitian, the above bound reduces to
%
%
%e7 #&#
\begin{equation}
\label{eqhermitianspecnormbound} \| \bA\|_\spec\le\max
_k \sum_{ l}\bigl|\bA(k, l)\bigr|.
\end{equation}
Then we have
%
%
%e8 #&#
\begin{eqnarray}
\label{eqtrunc1} \biggl\llvert\frac{\lambda_1(\bT_n (a) ) -
\lambda_1(\bT_n(\hat a) ) }{ \sqrt{ n \log n} } \biggr\rrvert&\le&
\frac{\| \bT_n (a) - \bT_n(\hat a) \|_\spec
}{ \sqrt{ n \log n} }
\nonumber
\\[-8pt]
\\[-8pt]
\nonumber
& \le&\frac{2 \sum_{i=0}^{n-1} |a_i| \1_{ \{ |a_i|
> n^{1/\gamma} \}}}{ \sqrt{ n \log n} },
\end{eqnarray}
which follows from bound \eqref{eqhermitianspecnormbound}
and from the fact that the $\ell^1$ norm of the each row of the Toeplitz
matrix $\bT_n (a) - \bT_n(\hat a) $ is bounded by $2
\sum_{i=1}^n |a_i| \1_{ \{ |a_i| > n^{1/\gamma} \}}$. We quote a
standard moment bound for sum of independent nonnegative random
variables, commonly known as Rosenthal's inequality in the
literature [see, e.g., \citet{Latala97}, Corollary~3] which says
that if $\xi_1, \ldots, \xi_n$ are independent nonnegative random
variables and $p \ge1$, then there exists a universal constant $C_p$ such
that
\[
\E\Biggl[ \Biggl(\sum_{i=1}^n
\xi_i \Biggr)^p \Biggr] \le C_p \max\Biggl(
\Biggl(\sum_{i=1}^n \E[
\xi_i] \Biggr)^p, \sum_{i=1}^n
\E\bigl[\xi_i^p \bigr] \Biggr).
\]
Clearly, $\E[|a_i|^\gamma\1_{ \{ |a_i| > n^{1/\gamma} \}} ] \le
\E[|a_i|^\gamma] \le C$. On the other hand, by H\"{o}lder's
inequality,
\[
\E\bigl[ |a_i| \1_{ \{ |a_i| > n^{1/\gamma} \}} \bigr] \le\bigl(\E
|a_i|^\gamma\bigr)^{1/\gamma} \cdot\bigl(\prob\bigl\{
|a_i|^\gamma> n \bigr\}\bigr)^{1 -
1/ \gamma},
\]
and by Markov's inequality, this is bounded above by
%
%
%e9 #&#
\begin{equation}
\label{tailbound} \bigl(\E|a_i|^\gamma
\bigr)^{1/\gamma} \bigl(\E|a_i|^\gamma/n
\bigr)^{1 - 1/\gamma} = \E\bigl[|a_i|^\gamma\bigr]
n^{-1 + 1/\gamma} \le Cn^{-1 +
1/\gamma}.
\end{equation}
Therefore, by Rosenthal's inequality,
%
%
%e10 #&#
\begin{equation}
\label{eqrosenthalbd} \E\Biggl[ \sum
_{i=0}^{n-1} |a_i| \1_{ \{ |a_i| > n^{1/\gamma} \}}
\Biggr]^\gamma= O(n).
\end{equation}
Combining \eqref{eqtrunc1} and \eqref{eqrosenthalbd},
we obtain
\[
\E\biggl\llvert\frac{\lambda_1(\bT_n (a) ) - \lambda_1(\bT
_n(\hat a) ) }{
\sqrt{ n \log n} } \biggr\rrvert^\gamma=
\frac{O(n)} { (n \log n)^{\gamma/2}
} \to0.
\]
Next we see that
\begin{eqnarray*}
\E\biggl\llvert\frac{\lambda_1(\bT_n (\hat a) ) - \lambda
_1(\bT_n(\tilde a) ) }{ \sqrt{ n \log n} } \biggr\rrvert^\gamma&\le&
\frac
{\E[ \| \bT_n (\hat a) - \bT_n(\tilde a) \|_\spec^\gamma]}{ (n
\log
n)^{\gamma/2} } \\
& \le&\frac{ (2 \sum_{i=0}^{n-1} \E|a_i| \1_{ \{
|a_i| > n^{1/\gamma} \}
})^\gamma}{ (n \log n)^{\gamma/2} } = \frac{O(n)}{ (n \log
n)^{\gamma
/2}}\to0,
\end{eqnarray*}
which completes the proof of part (a).

For part (b) we want to bound
\[
\biggl\llvert\frac{\lambda_1(\bT_n (\tilde a) ) - \lambda_1(\bT
_n(\bar a) )
}{ \sqrt{ n \log n} } \biggr\rrvert\le\frac{ \|\bT_n (\tilde a) -
\bT
_n(\bar a) \|_\spec}{ \sqrt{ n \log n} } \le
\sqrt{2} \cdot\frac{ \| \bP(\bD^{\dagger} (\tilde a) - \bD(\bar
a))\bP\|_\spec}{ \sqrt{ \log n} }.
\]
Here we use representation
\eqref{eqtoepcirc}. Since
$\bP$, being Hermitian projection matrix, has spectral norm
equal to one, with $a^*=\tilde a -\bar a$ we have
\[
\bigl\| \bP\bigl(\bD^{\dagger} (\tilde a) - \bD(\bar a)\bigr)\bP
\bigr\|_\spec=\bigl\| \bP\bD^{\dagger}\bigl(a^*\bigr) \bP\bigr\|_\spec
\le\bigl\| \bD^{\dagger}\bigl(a^*\bigr)\bigr\|_\spec= \max
_{0
\le j \le n} \bigl|d_j\bigl( a^*\bigr) \bigr|.
\]

We have
\[
1 - \operatorname{Var}(\tilde a_i) = \E\bigl[ a_i^2
\1_{ \{|a_i| > n^{1/\gamma} \}
} \bigr] + \bigl(\E[ a_i \1_{ \{|a_i| > n^{1/\gamma} \} } ]
\bigr)^2.
\]
We apply \eqref{tailbound}, and the similarly derived
\[
\E\bigl[ a_i^2 \1_{ \{|a_i| > n^{1/\gamma} \} } \bigr] \le
Cn^{-1 + 2/\gamma}
\]
to get $ 1 - \operatorname{Var}(\tilde a_i) = O(n^{-1 + 2/\gamma})$ uniformly
in $i$.
Note that $a_i^* = (1-\break  \operatorname{Var}(\tilde a_i)^{-1/2} )\tilde a_i$
which implies $\E a_i^* = 0$ and $\operatorname{Var}(a^*_i) = (1 -
\operatorname
{Var}(\tilde a_i)^{1/2} )^2 = O(n^{-2 + 4/\gamma})$ and $|a_i^*| \le
4n^{1/\gamma}$, for $n \ge n_0$.

Using the union bound and the identity $\E[ X] = \int_0^\infty\prob
\{
X> t\} \,dt$ which holds for any nonnegative random variable $X$, we can write
%
%
%e11 #&#
\begin{eqnarray}\label{eqgammanormneg}
\E\Bigl[\max_{0 \le j \le n} \bigl|d_j\bigl( a^*\bigr)
\bigr|^\gamma\Bigr] &\le&1+ \int_{1}^\infty\prob
\Bigl\{ \max_{ 0 \le j \le n} \bigl|d_j\bigl( a^*\bigr)
\bigr|^\gamma> t \Bigr\} \,dt
\nonumber
\\[-8pt]
\\[-8pt]
\nonumber
&\le&1+ (n+1) \max_{ 0 \le j \le n} \int_{1}^\infty
\prob\bigl\{ \bigl|d_j\bigl( a^*\bigr) \bigr| > t^{1/\gamma} \bigr\} \,dt.
\end{eqnarray}
If $\xi_1, \xi_2, \ldots, \xi_n$ be independent mean zero random
variables, uniformly\break bounded by $M$, then the classical Bernstein
inequality gives the following tail bound for their sum:
\[
\prob\Biggl\{ \sum_{ k=1}^n
\xi_k > t\Biggr\} \le\exp\biggl( - \frac{t^2/2}{\sum_{k=1}^n
\operatorname{Var}(\xi_k) + M t/3 } \biggr)\qquad \forall
t>0.
\]
Using Bernstein's inequality, we obtain
\begin{eqnarray*}
\eqref{eqgammanormneg} &\le&1+ (n+1) \int
_1^\infty2 \exp\biggl( - \frac{ t^{2/\gamma}(\sqrt n)^2}{ n \cdot
O(n^{-1 + 2/\gamma}) +
t^{1/\gamma}\sqrt{n} O(n^{1/\gamma})} \biggr)
\,dt
\nonumber\\
&\le&1+ 2(n+1) \int_1^\infty\exp\bigl( -
t^{1/\gamma} \cdot\Omega\bigl(n^{1/2 - 1/\gamma}\bigr) \bigr) \,dt
\\
&= &1 + n \cdot
n^{1/\gamma-1/2 }e^{ -
\Omega(n^{1/2 - 1/\gamma})}.\nonumber %&\le1+ 2(n+1) O(n^{ - \gamma/2 +1 })
\end{eqnarray*}
Hence, $ (\log n)^{-1/2} \| \bD(a^*)\| \stackrel{L^\gamma}{\to} 0$ and
so, $ \llvert\frac{\lambda_1(\bT_n (\tilde a) ) - \lambda_1(\bT
_n(\bar
a) ) }{ \sqrt{ n \log n} } \rrvert\stackrel{L^\gamma}{\to} 0$. This
completes the proof of part (b) of the lemma.
\end{pf}
%
%de4 #&#
\begin{definition}\label{defchangevariance}
Let $\bT_n^\circ$ be the symmetric Toeplitz matrix which has $\sqrt{2}
a_0$ on its diagonal instead of $a_0$.
\end{definition}
%
%le5 #&#
\begin{lemma} \label{lemdiagchange}
We have, as $ n \to\infty$,
\[
\frac{\lambda_1(\bT^\circ_n ) - \lambda_1(\bT_n) }{ \sqrt{ n
\log n} } \stackrel{L^\gamma} {\to} 0.
\]
\end{lemma}
\begin{pf}
The proof is immediate from the following fact:
\[
\E\bigl\| \bT^\circ_n - \bT_n\bigr \|^\gamma_\spec
\le(\sqrt2 -1)^\gamma\E\bigl[ |a_0|^\gamma\bigr] =
O(1). %\qedhere
\]
\upqed\end{pf}

%co6 #&#
\begin{corollary}\label{cortrunc}
It suffices to prove Theorem~\ref{thmmain} for the symmetric random
Toeplitz matrix $\bT_n^\circ$ defined in Definition~\ref
{defchangevariance} where the random variables $a_i$ are independent
mean zero, variance one and bounded by $2n^{1/\gamma}$.
\end{corollary}
\begin{pf}
The proof is immediate from Lemmas~\ref{lemtrunc} and~\ref
{lemdiagchange}.
\end{pf}

Following \eqref{eqtoepcirc}, we can write
%
%
%e12 #&#
\begin{equation}
\label{eqToepcircconnectionrightvariance}
\lambda_1\bigl(n^{-1/2}\bT_n^\circ
\bigr) = \sqrt{2} \lambda_1(\bP\bD\bP),
\end{equation}
with $\bP$ as before and the entries of the diagonal matrix $ \bD=
\operatorname{diag}(d_0, d_1, \ldots,\break d_{2n-1} )$ given by
\[
d_j = \frac{1}{\sqrt{2n}} \Biggl[ \sqrt{2} a_0 +
(-1)^j\sqrt{2} a_n + 2\sum_{k=1}^{n-1}
a_k \cos\biggl( \frac{2 \pi jk}{2n} \biggr) \Biggr],\qquad 0 \le j < 2n,
\]
where $b_n:= \sqrt2 a_n, a_n$ being an independent copy of $a_0$.
The reason for choosing the ``right'' variance for diagonal and the
``right'' auxiliary variable $b_n$ is that now the variables $d_j, 0
\le j \le n$ become uncorrelated; see Lemma~\ref{lemdjmeancov}. Thus
in the special case when $\{a_j \dvtx0 \le j \le n\} $ are i.i.d.
Gaussian random variables with mean~$0$ and variance $1$, it follows
that $\{ d_j\dvtx0 \le j \le n\}$ are again independent Gaussian with mean
$0$ and have variance $1$ except for $d_0$ and $d_n$ which have
variance $2$.
%s2.2 #&#
\subsection{Tightness}

%le7 #&#
\begin{lemma} \label{lemtightness}
For each $n \ge1$, let $a_0, a_1, \ldots, a_{n-1}$ be a sequence of
independent random variables that have mean zero, variance one and are
bounded by $n^{1/\gamma}$. For any $p> 0$, we have
\[
\sup_{ n \ge1 }\E\biggl[ \biggl(\frac{ \lambda_1(\bT_n^\circ
)}{\sqrt{2n \log n}}
\biggr)^p \biggr] < \infty.
\]
\end{lemma}
\begin{pf}
The proof is a direct application of Bernstein's inequality similar to
what we did in the proof of part (b) of Lemma~\ref{lemtrunc}.
From representation \eqref{eqToepcircconnectionrightvariance}, we
know that
$n^{-1/2}\lambda_1(\bT_n^\circ)$ is bounded above by $\sqrt2 \cdot
\max_{0 \le j \le n} |d_j| $. Therefore, for any $\alpha>0$,
\begin{eqnarray*}
\E\biggl[ \biggl(\frac{ \lambda_1(\bT_n^\circ)}{\sqrt{2n \log
n}} \biggr)^p \biggr] &\le&\E
\biggl[ \biggl( \frac{\max_{0 \le j \le
n}|d_j| }{ \sqrt{\log n}} \biggr)^p \biggr]
\\
&\le&\alpha+ (n+1) \max_{ 0 \le j \le n-1} \int_\alpha^\infty
\prob\biggl\{ \frac{|d_j|}{\sqrt{\log n}} > t^{1/p} \biggr\} \,dt.
\end{eqnarray*}
By the Bernstein inequality, for $n$ sufficiently large,
\[
\max_{ 0 \le j \le n} \prob\biggl\{ \frac{|d_j|}{\sqrt{\log n}} >
t^{1/p} \biggr\} \le2\exp\biggl(- \frac{(1/2) \log n \cdot
t^{2/p}}{2+ (1/3) \sqrt2 n^{1/\gamma- 1/2} \sqrt{ \log n} \cdot
t^{1/p} } \biggr),
\]
which implies that there exists a constant $c>0$ such that for each $n$
and each $0 \le j \le n$,
\[
\int_\alpha^\infty\prob\biggl\{ \frac{|d_j|}{\sqrt{\log n}} >
t^{1/p} \biggr\} \,dt \le\int_\alpha^{t_n}
\exp\bigl(- c t^{2/p}\cdot\log n \bigr) \,dt + \int_{t_n}^\infty
\exp\bigl(- c t^{1/p} \bigr) \,dt,
\]
where $t_n:= n^{p(1/2 -1/\gamma)}(\log n)^{-p/2}$. This particular
choice of $t_n$ is governed by the fact that $2+ \frac13 \sqrt2
n^{1/\gamma- 1/2} \sqrt{ \log n} \cdot t_n^{1/p} = O(1)$.
The second integral above goes to zero faster than any polynomial power
of $n$, whereas by choosing $\alpha$ sufficiently large we can make the
first integral $O(n^{-1})$. The claim of the lemma follows.
\end{pf}

%s3 #&#
\section{Reduction to block diagonal form}\label{secblock}

%s3.1 #&#
\subsection{Some facts about $\bP$} \label{subsecpropP}
By definition \eqref{pdef} $\bP\dvtx\mathbb C^{2n} \to\mathbb
C^{2n}$ is
a Hermitian projection matrix. The action of operator $\bP$ can be
described by the composition of the following three maps: For $x \in
\mathbb C^{2n}$, we first take discrete Fourier transform of $x$, then
project it to the first $n$ Fourier frequencies and finally do the
inverse discrete Fourier transform.

The entries of $\bP$ are given by
\[
\bP(k, l) = \cases{ %
 \displaystyle\frac12, &\quad $\mbox{for } k=l,$
\vspace*{2pt}\cr
0, & \quad$\mbox{for } k \ne l, |k-l| \mbox{ is even},$
\vspace*{2pt}\cr
\displaystyle\frac{1}{n} \times\frac{1}{1 - \exp( - {2 \pi\ci
(k-l)}/{(2n)} )}, &\quad $\mbox{for } |k- l| \mbox{ is
odd}.$}
\]
Note that $\bP(k, l)$ is a function of $(k-l)$ only and that
\[
\bigl|\bP(k, l)\bigr| \le C_1 / \min\bigl( |k-l|, 2n- |k-l| \bigr),\qquad k \ne l
\]
for some constant $C_1$.
Hence, the maximum of $\ell^1$ norms of the rows or the columns of
$\bP
$ has the following upper bound:
%
%
%e13 #&#
\begin{equation}
\label{eqProwsum} \max_{k} \sum
_{l=0}^{2n-1} \bigl|\bP(k, l)\bigr| \le C_2 \log n,\qquad
\max_{l} \sum_{k=0}^{2n-1}
\bigl|\bP(k, l)\bigr| \le C_2 \log n,
\end{equation}
where $C_2$ is some suitable constant.

\subsubsection*{Limiting operator for $\bP$} Let $\T$ be unit circle
parametrized by $\T= \{ e^{ 2 \pi\ci x}\dvtx x \in(-1/2, 1/2] \}$ and
$L^2(\T):= \{ f\dvtx[-1/2, 1/2] \to\mathbb C \dvtx f(-1/2) =
f(1/2)\break
\mbox{and } \int_{-1/2}^{1/2} |f(x)|^2 \,dx < \infty\}$. We define an
projection operator $\Pi\dvtx\ell^2(\mathbb C) \to\ell^2(\mathbb
C)$ as
the composition of the following operators:
%
%
%e14 #&#
\begin{equation}
\Pi\dvtx\ell^2(\mathbb C) \stackrel{\psi} {\longrightarrow}
L^2(\T) \stackrel{\chi_{[0,1/2]}} {\longrightarrow}
L^2(\T) \stackrel{\psi^{-1} } {\longrightarrow}
\ell^2(\mathbb C), \label{eqPidef}
\end{equation}
where $\psi\dvtx\ell^2(\mathbb C) \to L^2(\T) $ is the Fourier transform
which sends the coordinate vector $e_m, m \in\mathbb Z$ to the
periodic function $x \mapsto e^{2\pi\ci mx} \in L^2(\T)$, $\psi^{-1}$
is the inverse map of $\psi$ and $\chi_{[0,1/2]}\dvtx L^2(\T) \to
L^2(\T) $
is the projection map given by $f \mapsto f\times 1_{[0,1/2]}$. The
operator $\Pi$ is Hermitian and is defined on the entire $\ell
^2(\mathbb C)$ and hence is self-adjoint. It is easy to check that for
any $k, l \in\mathbb Z$,
\[
\langle e_k, \Pi e_l\rangle=: \Pi(k, l) = \cases{
\displaystyle\frac12, & \quad$\mbox{if } k = l,$
\vspace*{2pt}\cr
0, & \quad$\mbox{if } k \ne l, |k- l| \mbox{ is even},$
\vspace*{2pt}\cr
\displaystyle\frac{-\ci}{\pi(k-l)}, &\quad $\mbox{if } |k- l| \mbox{ is odd}.$}
\]
Here $ \langle\cdot, \cdot\rangle$ denotes the usual inner product on
$\ell^2(\mathbb C)$.

Define $2 \to4$ operator norm of $\Pi$ as
\[
\|\Pi\|_{2 \to4}:= \sup\bigl\{ \| \Pi\mathbf{v}\|_4 \dvtx
\mathbf{v}\in
\ell^2(\mathbb C), \| \mathbf{v}\|_2 \le1 \bigr\},
\]
where for any vector $\mathbf{v}\in\ell^2(\mathbb C)$ and $p \ge1$,
$\|\mathbf{v}\|
_p$ denotes the standard $\ell^p$ norm of~$\mathbf{v}$.

%Check that $|\bP_{2n}(k, l) - \Pi(k, l)|$ converges to $0$ as $n \to
%can deduce that

We claim that
%
%
%e15 #&#
\begin{equation}
\label{eqrateofconv} \bigl|\bP_{2n}(k, l) - \Pi(k, l)\bigr| = O
\bigl(n^{-1}\bigr)\qquad \mbox{as } n \to\infty\mbox{ and } |k-l| = o(n).
\end{equation}
There is nothing to prove if $|k-l|$ is even. So assume that $|k - l| $
is odd and $|k-l| = o(n)$. In this case we can write $\bP_{2n}(k, l) -
\Pi(k, l)$ as
$ n^{-1} \times\frac{1}{x_n} [ \frac{x_n}{ 1 - e^{ - \ci x_n} }
+ \ci] $ where $x_n = \pi(k - l)/n = o(1)$. Now \eqref
{eqrateofconv} easily follows from the following limit, which is elementary:
\[
\lim_{x \to0} \frac{1}{x} \biggl[ \frac{x}{ 1 - e^{ - \ci x} } +
\ci\biggr]  = \lim_{x \to0} \frac{ 1 - \sin x/x + \ci(1-\cos
x)/x }{
(1-\cos x)+ \ci\sin x} = \frac12.
\]

We will make use of \eqref{eqrateofconv} when we prove the upper bound
for the top eigenvalue in Section~\ref{supper}.

%s3.2 #&#
\subsection{\texorpdfstring{Allowing $\eps$ room}{Allowing epsilon room}}
As one might guess, the diagonal
entries of $\bD$ which have small absolute value should not have too
much influence on determining the value of $\lambda_1(\bP\bD\bP)$. In
this subsection, we will make this idea precise. For $\eps>0$, consider
the random set
\[
S=\bigl\{0\le j\le2n-1 \dvtx|d_j|\ge\eps\sqrt{2\log n}\bigr\},
\]
let $\bR:= \operatorname{diag}(1_{j\in S})$ and let $\bD^{\eps
}:=\bD\bR$. Then
%
%
%e16 #&#
\begin{eqnarray}
\label{deps} \biggl\llvert\frac{ \lambda_1( \bP\bD\bP
)}{\sqrt{2\log n} } - \frac{\lambda_1( \bP\bD^{\eps} \bP)}
{\sqrt{2\log n} } \biggr\rrvert
&\le&\frac{ \| \bP(\bD- \bD^{\eps}) \bP\|_\spec
}{\sqrt{2\log n} }
\nonumber
\\[-8pt]
\\[-8pt]
\nonumber
&\le&\frac{ \| \bP\|_\spec\| \bD- \bD^{\eps}\|
_\spec\|\bP\|_\spec}{\sqrt{2\log n} } \le\eps.
\end{eqnarray}

%s3.3 #&#
\subsection{Random partition of the interval} \label{subsecpartition}
For a set $B$, we denote by $\#B$ the cardinality of $B$.

Set $r =\lceil\log n \rceil^3$, and let $m = \lfloor n/ 2r
\rfloor$. Divide the interval $\{0, 1, 2, \ldots, 2n-1\}$ into
$2m+1$ consecutive disjoint subintervals (called \emph{bricks})
$L_{-m}, \ldots, L_{-1}$, $L_0, L_1, \ldots, L_m $ in such a way that:
\begin{itemize}
\item$0 \in L_{-m}$ and $n \in L_0$,
\item the length of each $L_i$ is between $r$ and
$4r$ and
\item the subdivision is symmetric about $n$: $L_i=2n-L_{-i}$ for $-m <
i< m$ and $L_{-m} \setminus\{0\} = 2n - L_m$.
\end{itemize}

%
%with as below.
%and
%-j: j \in L_{-k}\} \mbox{ for all } 1 \le k < m. \]
%We call each the sets $L_k$ a \textit{brick}. Note that $r \le\#L_k \le
%4r$ for all $k$.

We define a \emph{block} to be a (nonempty) union of consecutive
bricks, say
\[
J = L_j \cup L_{j+1} \cup\cdots\cup L_{k-1} \cup
L_k,
\]
with $-m \le j \le k \le m$. We fix $\eps>0$, and set $M = M(\eps): = 4
+ 12/
\eps^{2}$. We will call a block $J$ \emph{admissible} if:
\begin{longlist}[(a)]
\item[(a)] either $J \subseteq L_{-m+1} \cup L_{-m+2} \cup\cdots\cup
L_{-1}$ or $J \subseteq L_{1} \cup L_{2} \cup\cdots\cup L_{m}$ and
\item[(b)] the number $1+k - j$ of bricks that make up the block $J$ is
at most
$M(\eps)$.
\end{longlist}
The set of all \emph{admissible blocks} will be denoted by $\lcal$. We
mention here that the notion of admissible blocks depends on the fixed
parameter $\eps$. Notice that (a) is equivalent to requiring $L_{-m}
\nsubseteq J$ and $L_0 \nsubseteq J$. Moreover, if $L \in\lcal
$, then size of $L$ is
bounded below and above by $r$ and $4Mr$, respectively.

%Let $\lcal$ be the collection of nonempty sets consisting of the
%intervals formed by taking union at most $M = M(\eps): = 4 + 12/
%L_{-m+2}, \ldots, L_{-1}$ or from $L_{1}, L_{2}, \ldots,
%L_{m-1}$. We will call an element of $\lcal$ as an {\bf
%admissible block}.

We define a brick $L_k$ to be \textit{visible} if $L \cap S \ne
\varnothing$. Otherwise $L_k$ is called \textit{invisible}. Clearly,
for $1 \le k <m$, $L_k$ is visible if and only if $L_{-k}$ is
visible. Given the random set $S$, partition $ \{0, 1, 2, \ldots, 2n -
1\}$ into disjoint intervals $J$ by subdividing between each pair of
consecutive invisible bricks. Clearly, each such $J$ is a block. Denote
the collection of all those $J$'s by $\Lambda$. Note that the $\Lambda$
and the $J$'s are at random. Note that in this
random partition, each block $J \in\Lambda$ starts and ends with a
``gap'' of
size at least $(\log n)^3$. More precisely, for any $[a, b]\in
\Lambda$, we have $[a, a + (\log n)^3] \cap S = \varnothing$
(unless $a=0$) and $[b- (\log n)^3, b] \cap S = \varnothing$ (unless
$b=2n-1$).

%pr8 #&#
\begin{proposition} \label{proppartition} For each $\eps>0$ and each
division $\lcal$,
the following holds with high probability:
For each $J\in\Lambda$, if $J \cap S \ne\varnothing$, then:
\begin{longlist}[(1)]
\item[(1)] $J \in\lcal$ and
\item[(2)] $\# ( J \cap S) \le M$ for all $J \in\mathcal L$.
\end{longlist}
\end{proposition}

\begin{pf}
For any fixed $s \ge1$ and $0 \le j_1< j_2< \cdots< j_s \le n$,
%
%
%e17 #&#
\begin{eqnarray}\label{ineqtailestimate}
&&\prob\bigl\{ |d_{j_i}| > \eps\sqrt{2\log n}, 1 \le i \le s \bigr\} \nonumber\\
&&\qquad\le\sum
_{
\beta_i \in\{-1, +1\} } \prob\Biggl\{ \sum
_{ i=1}^s \beta_i d_{j_i} >
s \eps\sqrt{2\log n} \Biggr\}
\nonumber
\\[-8pt]
\\[-8pt]
\nonumber
&&\qquad\le2^s \exp\biggl( -\frac{ \eps^2 s^2 \cdot\log n}{ \operatorname
{Var}(\sum_{ i=1}^s \beta_i d_{j_i}) + O(s n^{1/\gamma-1/2})\cdot s
\eps\sqrt{2\log n}} \biggr)
\\
&&\qquad = O\bigl(
n^{ - s\eps^2/3} \bigr).\nonumber
\end{eqnarray}
The second inequality in \eqref{ineqtailestimate} is a consequence of
Bernstein's
inequality once we write $\sum_{ i=1}^s \beta_i d_{j_i} $ as the linear
sum of $a_i$'s. Note that coefficient of each $a_j$ in the sum is of
the order of $s \times O(n^{-1/2})$, and on other hand, each $a_j$ is
bounded by $n^{1/\gamma}$. For the third inequality above we used the
fact that $
\operatorname{Var}(\sum_{ i=1}^s \beta_i d_{j_i}) = \sum_{ i=1}^s
\operatorname{Var}(d_{j_i}) \le(s+2)$ (by Lemma~\ref{lemdjmeancov}).

Note that if ${(1)}$ or ${(2)}$ fails, then one of the
following events holds:
\begin{longlist}[(iii)]
\item[(i)] either of the bricks $L_0$ and $L_{-m}$ is visible;
\item[(ii)] there exists a stretch of $M$ consecutive bricks from $L_{
-m+1},L_{-m+2}, \ldots,\break  L_{-1}$ such that at least $\lfloor
M/2\rfloor-1$ of them are visible;
\item[(iii)] there exists a stretch of $M$ consecutive bricks (say,
$L_a, L_{a+1}, \ldots,\break L_{a+M-1}$) from $L_{ -m+1}, L_{-m+2}, \ldots
, L_{-1}$ such that $\sum_{i=0}^{M-1} \#(L_{a+i} \cap S) \ge M$.
\end{longlist}
By \eqref{ineqtailestimate}, we have $\prob\{ \mbox{event (i)} \} =
O(n^{-\eps^2/3})$. We observe that events (ii) and (iii) are both
contained in the following event:
\begin{longlist}[(iii)]
\item[(iv)] there exists a stretch of $M$ consecutive bricks (say,
$L_a, L_{a+1}, \ldots,\break L_{a+M-1}$) from $L_{ -m+1}, L_{-m+2}, \ldots,
L_{-1}$ such that $\sum_{i=0}^{M-1} \#(L_{a+i} \cap S) \ge\break\lfloor
M/2\rfloor-1$.
\end{longlist}
Again by \eqref{ineqtailestimate}, if we fix a position of such $M$
consecutive blocks $L_a, L_{a+1}, \ldots,\break  L_{a+M-1}$ and then fix $s=
\lfloor M/2\rfloor-1$ positions $j_1, j_2, \ldots$ within the blocks
$L_a, L_{a+1}, \ldots, L_{a+M-1}$, the probability that $j_1, j_2,
\ldots, j_s \in S$ is bounded above by $O( n^{ - s\eps^2/3} ) = O(n^{-2})$.
Hence by union bound, $\prob\{\mbox{event (iv)}\} =\break O(n (\log
n)^{3M/2} \times n^{-2})$. This is because we can choose the index $a$ from
the set $\{ -m +1, -m+2, \ldots, -1\}$ in at most $m \le n$ ways and
$s$ positions $j_1, j_2, \ldots$ can be selected in the blocks $L_a,
L_{a+1}, \ldots, L_{a+M-1}$ in at most ${ 4Mr \choose s} \le O((\log
n)^{3M/2})$ ways.

This implies that the probability that either of the events (i) or
(iv) happens goes to $0$ as $n \to\infty$. This completes the proof.
\end{pf}

%s3.4 #&#
\subsection{Reduction to a block diagonal form}
Let $\bB$ be the following block diagonal form of the matrix $\bP$:
\[
\bB(k, l) = \cases{ %
\bP(k, l), & \quad$\mbox{if } k, l
\in J \mbox{ for some } J \in\Lambda,$
\vspace*{2pt}\cr
0, & \quad $\mbox{otherwise}.$}
\]

%le9 #&#
\begin{lemma}\label{lemblockform}
For each $\varepsilon>0$, there exists $K >0$ such that with high probability,
\[
\Bigl\llvert\lambda_1 \bigl( \bP\bD^{\eps} \bP\bigr) -
\max_{ J\in\Lambda: J
\cap S \ne\varnothing} \lambda_1 \bigl( \bP[J ]
\bD^{\eps} [J] \bP[J ] \bigr)\Bigr\rrvert\le K.
\]
\end{lemma}
\begin{pf}
The maximum above equals $\lambda_1( \bB\bD^{\eps} \bB)$, so
the left-hand side is bounded above by
\begin{eqnarray*}
\bigl\| \bP\bD^{\eps} \bP- \bB\bD^{\eps} \bB\bigr\|_\spec&\le&
\bigl\| (\bP- \bB) \bD^{\eps} \bP\bigr\|_\spec+ \bigl\| \bB\bD^{\eps}
(\bP- \bB) \bigr\|_\spec
\\
&\le&\bigl\| (\bP- \bB) \bD^{\eps}\bigr\|_\spec\|\bP\|_\spec+
\| \bB\| _\spec\bigl\| \bD^{\eps} (\bP- \bB) \bigr\|_\spec.
\end{eqnarray*}
Note that since $\bP$ is a projection matrix $\| \bP\|_\spec= 1$
and by block-diagonality we have $\| \bB\|_\spec= \max_{J \in
\Lambda} \| \bP[J] \|_\spec$. The matrix $\bP[J]$ is just $\bP$
conjugated by a coordinate projection, so it has norm at most 1.

Since $\bD^\eps=\bR\bD$ we first bound the spectral norm of $(\bP
- \bB) \bR$. The maximal column sum of this matrix is bounded
above by the maximal absolute row sum of~$\bP$, which by \eqref{eqProwsum}
is at most $C_2 \log n$. Note that $((\bP-\bB)\bR)(k,l)=0$ unless
$k,l$ are in different parts of $\Lambda$. This gives the upper
bound for the maximal absolute row sum
\[
\max_{J \in\Lambda, k \in J } \sum_{ l \notin J} \bR(l, l)
\bigl|\bP(k, l) \bigr|
\le C\sum_{ k =1}^{\# \Lambda-1} \frac{2M}{k (\log n)^3} =
O\bigl((\log n)^{-2}\bigr),
\]
which holds with high probability. We used the fact that with high
probability each part in $\Lambda$ has at most $M$ elements $j$
where $\bR(j,j)$ is nonzero (Proposition~\ref{proppartition}),
and different parts have gaps of size $(\log n)^3$ in between
them. Since the spectral norm is bounded above by the geometric
mean of the maximal row and column sums, we get
$\|(\bP-\bB)\bR\|_\spec= O((\log n)^{-1/2})$.

%
%and the similar bound holds also for $\| \bD^{\eps} (\bP- \bB)
%k=0}^{2n-1} |\bP(k, l)| \le C_2 \log n.
% \]
%As a consequence of Proposition~\ref{proppartition}, the
%following bound holds \bwhp,
% \max_k \sum_{ l=0}^{2n-1} \bR(l, l) |(\bP- \bB)(k, l)| & \le\max_{J
% &\le\sum_{ k =1}^{\# \Lambda-1} \frac{2M}{k (\log n)^3} = O((\log
%n)^{-2}). \end{eqnarray*}
%Hence \bwhp, $\| (\bP- \bB) \bD^{\eps}\|_\spec^2 \le\max_j
%|d_j|^2 \cdot O((\log n)^{-1}).$
%
By Bernstein's inequality, we can find a constant $C_3$ such that
with high probability $\|\bD\|_\spec= \max_j |d_j|$ is at most
$C_3 \sqrt{ \log n}$. Therefore, $\| (\bP- \bB)\bD^{\eps}
\|_\spec^2 = O(1)$ with high probability. Similarly, $\| \bD^{\eps}
(\bP-
\bB) \|_\spec^2 = O(1)$ with high probability, which yields the
desired result.
\end{pf}

%s4 #&#
\section{Proof of the upper bound}\label{supper}
Assume that $a_0, a_1, \ldots, a_n$ are independent mean zero, variance
one random variables which are uniformly bounded by $n^{1/\gamma}$.
This section consists of the proof of the upper bound.
%
%pr10 #&#
\begin{proposition}\label{proupperbound} For each $\varepsilon>0$, there
exists $c_n=o(1)$ such that with high probability,
\[
\max_{ J\in\Lambda: J\cap S \ne\varnothing} \frac{\lambda_1( \bP
[J ]
\bD^{\eps} [J] \bP[J ])} {\sqrt{2\log n} } \le\| \Pi\|^2_{2 \to4}
+ c_n.
\]
\end{proposition}

%le11 #&#
\begin{lemma} \label{uppertailbd}
Fix $k \ge1$ and $\delta_1, \delta_2, \ldots, \delta_k \ge0$. Let
$\|
\delta\|_2:= (\delta_1^2 + \cdots+ \delta_k^2)^{1/2}$. Then for
sufficiently large $n$ and for any $0 < j_1 < j_2 < \cdots< j_k < n$,
\[
\prob\bigl\{ |d_{j_1}| > \delta_1 \sqrt{ 2 \log n},
|d_{j_2}| > \delta_2 \sqrt{ 2 \log n}, \ldots,
|d_{j_k}| > \delta_k \sqrt{ 2 \log n}\bigr\} \le
2^{k+1}n^{-\| \delta\|_2^2 }.
\]
\end{lemma}
\begin{pf}
To avoid triviality, assume that $\| \delta\|_2 >0$. Clearly,
\begin{eqnarray*}
&&\prob\bigl\{ |d_{j_1}| > \delta_1 \sqrt{ 2 \log n},
\ldots, |d_{j_k}| > \delta_k \sqrt{ 2 \log n} \bigr\}
\\
&&\qquad \le\sum_{ \beta_i \in\{ \pm
1\} }\prob\Biggl\{ \Biggl( \sum
_{i=1}^k \beta_i
\delta_i d_{j_i} \Biggr) \Big/ \| \delta\|_2 > \|
\delta\|_2 \sqrt{ 2 \log n} \Biggr\}.
\end{eqnarray*}
By Lemma~\ref{lemdjmeancov}, for any choice of $\beta_i \in\{
\pm1\}$, the sum $ (\beta_1\delta_1 d_{j_1} + \cdots+
\beta_k\delta_k d_{j_k} )/\break \| \delta\|_2$ has variance one and
can be expressed as a linear combination independent random
variables as $ \sum_{i=0}^n \theta_i a_i$ for suitable real
coefficients $\theta_i$ with $|\theta_i| \le2 k n^{-1/2}$ and $
\sum_{i=0}^n\theta_i^2 =1$. Recall that $|a_i| \le n^{1/\gamma}$
so that we can apply Bernstein's inequality to obtain
\begin{eqnarray*}
&&\prob\Biggl\{ \Biggl( \sum_{i=1}^k
\beta_i \delta_i d_{j_i} \Biggr)\Big / \| \delta
\|_2 > \| \delta\|_2 \sqrt{ 2 \log n} \Biggr\} \\
&&\qquad\le\exp
\biggl( - \frac{\| \delta\|_2^2 \log n}{1 + \| \delta\|_2 \sqrt{ 2
\log n} \cdot2k n^{1/\gamma- 1/2}/3 } \biggr)
\\
&&\qquad \le2 n^{-\|\delta\|^2_2}
\end{eqnarray*}
for sufficiently large $n$. This completes the proof of the lemma.
\end{pf}

%le12 #&#
\begin{lemma}\label{lemuppertailbd2}
Fix $\eta>0$, $k \ge1$. Then there exists a constant $C_4 = C_4(\eta,
k)$ such that for sufficiently large $n$ and for any $0 < j_1 < j_2 <
\cdots< j_k < n$, we have
%
%
%e18 #&#
\begin{eqnarray*}
&&\prob\bigl\{ |d_{j_1}| > \delta_1 \sqrt{ 2 \log n},
|d_{j_2}| > \delta_2 \sqrt{ 2 \log n}, \ldots,
|d_{j_k}| > \delta_k \sqrt{ 2 \log n}\\
&&\hspace*{14pt}\mbox{for some }\delta_1, \ldots, \delta_k > 0 \mbox{ such that }
\delta_1^2 + \delta_2^2 + \cdots+
\delta_k^2 \ge1+\eta\bigr\}\\
&&\qquad \le C_4n^{-(1+\eta/2) }.
\end{eqnarray*}
\end{lemma}
\begin{pf}
Construct an $\frac{\eta}{4(1+\eta)k}$-net $\mathcal N$ for the
interval $[0, 1 +\eta]$ by choosing $ \lfloor\frac{4(1+\eta
)^2k}{\eta}
\rfloor+1$ equally spaced points in $[0, 1+\eta]$ including both
endpoints $0$ and $1+\eta$. Therefore,
given any $\delta_1, \ldots, \delta_k \in[0, 1+\eta]$, we can find
$\alpha_1, \alpha_2, \ldots, \alpha_k \in\mathcal N$ such that
$ \delta_i - \frac{\eta}{4(1+\eta)k} \le\alpha_i \le\delta_i$ for
each $i$. This implies that $\alpha_1^2 +\alpha_2^2 + \cdots+ \alpha
_k^2 > \sum_{i=1}^k ( \delta^2_i - 2 \delta_i \frac{\eta}{4(1+\eta
)k} )\ge\delta_1^2 + \delta_2^2 + \cdots+ \delta_k^2 - \eta/2 $.
Clearly, the event $\{ |d_{j_1}| > \delta_1 \sqrt{ 2 \log n}, \ldots,
|d_{j_k}| > \delta_k \sqrt{ 2 \log n}$ for some
$\delta_1, \ldots, \delta_k \in[0, 1+\eta]$ such that $\delta_1^2 +
\delta_2^2 + \cdots+ \delta_k^2 \ge1+\eta\}$ is contained in the
finite union of events $\{ |d_{j_1}| > \alpha_1 \sqrt{ 2 \log n},
\ldots
, |d_{j_k}| > \alpha_k \sqrt{ 2 \log n} \}$ where the union is taken
over for all possible choices of $ \alpha_i \in\mathcal N$ for all $i$
such that $\alpha_1^2 +\alpha_2^2 + \cdots+ \alpha_k^2 \ge1+ \eta/2$.
Now we use the union bound and Lemma~\ref{uppertailbd} to conclude the
probability of the given event is bounded by
$2^{k+1}(\# \mathcal N )^k n^{-(1+\eta/2)}$. On the other hand, it
again follows from Lemma~\ref{uppertailbd} that the probability of the
event $\{ |d_{j_i}| > (1+\eta) \sqrt{ 2 \log n} \mbox{ for some } 1
\le
i \le k \}$ is bounded by $2k n^{-(1+\eta)^2} \le2k n^{-(1+\eta/2)}$.

We complete the proof by taking $C_4 = 2^{k+1}(\# \mathcal N )^k + 2k$.
\end{pf}
%
%Fix $0< \eta< 1$ and $k \in\mathbb N$. Let $\xi= (\xi_1, \xi_2,
%are i.i.d.\ standard Gaussian random variables. Then there exists a
%constant $B$ such that for all $n \ge2$, we have
%Note that $\| \xi\|_2^2$ has standard $\chi^2_k$ distribution whose
%pdf has the form \[f(x) = c x^{k/2-1} e^{-x/2}\1_{ \{x>0\}},\]
%where $c$ is the normalisation constant. Therefore, we have
% &= c \int_{2(1+\eta)^2 \log n}^\infty\exp\left({- \frac{x}{2} \Big
%(1 - \frac{(k-2)\log x}{x} \Big)} \right)dx\\
% &\le c \int_{2(1+\eta)^2 \log n}^\infty\exp\left(- \frac{x}{2} \Big
%(1 - \frac{(k-2)\log\log n}{\log n} \Big) \right)dx\\
% &\le4c \exp\left(- \frac{ 2(1+\eta)^2 \log n}{2} \Big(1 -
% &\le4c (\log n)^{ 4(k-2) }n^{-(1+\eta)^2}.
%Now we take $B=4c$ to conclude the proof.

%co13 #&#
\begin{corollary}
Let $\eps>0$, and let $M = M(\eps)$ be as defined in Section~\ref{subsecpartition}.
For every $\eta>0$, with high probability, for all
admissible
blocks $L\in\mathcal L$ and all distinct $j_1,\ldots,j_M\in L$ we
have $d_{j_1}^2+\cdots+d_{j_M}^2\le(1+\eta) \sqrt{2 \log n}$.
\end{corollary}

\begin{pf}
For a fixed admissible block and points $j_i$ the probability that
the claim is violated is at most $C_4n^{-(1+ \eta/2) }$ by the
lemma. By union bound, the probability that the claim is violated
is at most $ n { 4M(\log n)^3 \choose M} \cdot C_4 n^{-(1+\eta/2)} =
O((\log n)^{3M}n^{-\eta/2})$. This is because there can be at most $n$
admissible blocks, the length of an admissible block can be at most $4M
(\log n)^3 $ and the number of ways $M$ distinct indices can be chosen
from an admissible block is at most ${ 4M(\log n)^3 \choose M}$.
\end{pf}

By part (2) of Proposition~\ref{proppartition}, with high probability,
$\bD^\eps$ contains at most $M$ nonzero entries in every
admissible block. Thus it follows that for each $\eta>0$ with high
probability for all $L\in\mathcal L$,
\[
\sum_{j\in L} \bigl(\bD^{\eps}_{j,j}
\bigr)^2 \le1+\eta.
\]
Therefore, we have
%
%
%By Proposition~\ref{proppartition} \bwhp, for any $J\in
%J\cap S$ has size at most $M$.
% \max_{ J \in\Lambda: J \cap S \ne\varnothing} \frac{\lambda_1( \bP[J
%] \bD^{\eps} [J] \bP[J ])} {\sqrt{2\log n} } \le\max_{L \in\mathcal
%L, \bB\in\bcal} \frac{\lambda_1( \bP[L ] \bD^{\eps} [L] \bB\bP[L
%])} {\sqrt{2\log n} }
% \end{equation}
%where $\bcal= \bcal(\#L) $ is the set of all $\# L \times\# L$
%nonzero diagonal matrices that have all zeros on the diagonal except
%for at most $M$ ones.
%
%Let $\eta>0$ be a small constant. Fix an admissible block $L \in
%for some $1 \le s \le M$. Then by Lemma~\ref{lemuppertailbd2},
%the probability of the event that
%n} \mbox{ \ \ for some } \\
% \end{split}\]
%is bounded by $\le C_4n^{-(1+ \eta/2) }$. So, by union bound, the
%probability that the above event happens for \textit{at least one}
%admissible block $L \in\lcal$ and for \textit{any} $s$ distinct
%points in $L$ for some $1 \le s \le M$ is bounded above by $ n {
%M(\log n)^3 \choose M} \cdot C n^{-(1+\eta/2)} = O((\log
%n)^{3M}n^{-\eta/2})$.
%Combined with \eqref{eqrandomtofixedblock},
%this implies that \bwhp
%
%
%e19 #&#
\begin{eqnarray}\label{eqoptimzedeltabound}
\qquad&&\max_{ J \in\Lambda: J \cap S \ne\varnothing} \frac{\lambda_1 (
\bP
[J ] \bD^{\eps} [J] \bP[J ] )} {\sqrt{2\log n} }
\nonumber
\\[-8pt]
\\[-8pt]
\nonumber
%&\le\sup\Big\{ \lambda_1 \Big( \bP[L ] \operatorname{Diag}(\delta_1,
%L, \bB\in\bcal, \sum_{j=1}^{\#L} \bB_{jj} \delta_j^2 \le1+ \eta\Big
&&\qquad\le\sup\Biggl\{
\lambda_1 \bigl( \bP[L ] \operatorname{Diag}(\delta_1,
\delta_2, \ldots, \delta_{ \#L}) \bP[L ] \bigr) \dvtx L \in
\mathcal L, \sum_{j=1}^{\#L}
\delta_j ^2 \le1+ \eta\Biggr\},
\end{eqnarray}
which holds with high probability due to part (1) of Proposition~\ref
{proppartition}.
Now by \eqref{eqrateofconv}, we have $\max_{ L \in\lcal} \|
\bP[L] - \Pi[L] \|_{\spec} = O ( \frac{(\log
n)^3}{n} )$. Therefore, if we take $q=q(n):= 4M \lceil\log
n \rceil^3$ and $H = [1, q] \cap\mathbb Z$, then
\eqref{eqoptimzedeltabound} can be bounded by
\begin{eqnarray*}
\eqref{eqoptimzedeltabound} &\le&\sup\Biggl\{ \lambda_1 \bigl(
\Pi[L ] \operatorname{Diag}(\delta_1, \delta_2, \ldots,
\delta_{ \#L}) \Pi[L ] \bigr) \dvtx L \in\mathcal L, \sum
_{j=0}^{\#L} \delta_j ^2 \le1+
\eta\Biggr\}
\nonumber\\
&&{} + O \biggl( \frac{(\log n)^3}{n} \biggr)
\nonumber
\\[-8pt]
\\[-8pt]
\nonumber
&\le&(1+\eta)^{1/2}\sup\Biggl\{ \lambda_1 \bigl( \Pi[ H]
\operatorname{Diag}(\delta_1, \delta_2, \ldots,
\delta_{ q}) \Pi[H ] \bigr) \dvtx\sum_{j=1}^{q}
\delta_j ^2 \le1 \Biggr\}
\\
&&{} + O \biggl( \frac{(\log n)^3}{n} \biggr).\nonumber
\end{eqnarray*}
By Rayleigh's characterization of the maximum eigenvalue of
Hermitian matrices, the above supremum equals
\[
\sup\Biggl\{ \bigl\langle\mathbf{v}, \Pi[ H] \operatorname
{Diag}(\delta_1,
\delta_2, \ldots, \delta_{ q}) \Pi[H ] \mathbf{v}\bigr\rangle
\dvtx
\sum_{j=1}^{q} \delta_j
^2 \le1, \mathbf{v}\in\mathbb C^q, \|\mathbf{v}\|_2 \le1
\Biggr\}.
\]
Denote by $\delta$ the infinite dimensional vector $(\ldots,
\delta_{-1}, \delta_0, \delta_1, \ldots)$ in $\ell^2(\mathbb R)$.
Now extending the range of optimization we get the upper bound
%
%
%e20 #&#
\begin{equation}
\label{Piopt} \sup\bigl\{ \bigl\langle\mathbf{v}, \Pi
\operatorname{Diag}(\delta) \Pi\mathbf{v}
\bigr\rangle\dvtx\delta\in\ell^2(\mathbb R), \|\delta\|_2
\le1, \mathbf{v}\in\ell^2(\mathbb C), \|\mathbf{v}\|_2 \le1 \bigr
\}.
\end{equation}
Now note that (with $\odot$ denoting coordinate-wise
multiplication)
\[
\bigl\langle\mathbf{v}, \Pi\operatorname{Diag}(\delta) \Pi
\mathbf{v}\bigr\rangle= \bigl\langle
\Pi\mathbf{v}, \operatorname{Diag}(\delta) \Pi\mathbf{v}\bigr
\rangle= \langle\delta, \overline{\Pi
\mathbf{v}} \odot\Pi\mathbf{v}\rangle,
\]
so if we fix $\mathbf{v}$, this is maximized when $\delta$ equals
$\overline{\Pi\mathbf{v}} \odot\Pi\mathbf{v}$ divided by its
length. The
maximum, for $\mathbf{v}$ fixed is $ \|\overline{\Pi\mathbf
{v}}\odot\Pi
\mathbf{v}\|_2=\|\Pi\mathbf{v}\|_4^2 $, so expression \eqref{Piopt} equals
$\| \Pi\|_{2 \to4}^2$. This completes the proof of Proposition~\ref
{proupperbound}.

%s5 #&#
\section{Proof of the lower bound}\label{slower}
Assume that $a_0, a_1, \ldots, a_n$ are independent mean zero, variance
one random variables which are uniformly bounded by $n^{1/\gamma}$.
This section consists of the proof of the lower bound.
%
%pr14 #&#
\begin{proposition}\label{prolowerbound} For each $\tau>0$ and $\eps
>0$, with high probability,
\[
\max_{ J\in\Lambda\dvtx J\cap S \ne\varnothing} \frac{\lambda
_1( \bP[J ]
\bD^{\eps} [J] \bP[J ])} {\sqrt{2\log n} } \ge\| \Pi\|^2_{2 \to4}
- 2\tau.
\]
\end{proposition}
Let $G_0, G_1, \ldots, G_n$ be i.i.d. standard Gaussians independent
of $(a_i)_{ 0 \le i \le n}$. Recall
\[
d_j(a) = \frac{1}{\sqrt{2n}} \Biggl[ \sqrt{2} a_0 +
(-1)^j\sqrt{2} a_n + 2\sum_{k=1}^{n-1}
a_k \cos\biggl( \frac{2 \pi j k }{n} \biggr) \Biggr], \qquad 0 \le j < 2n.
\]
We will use $d_j(G)$ to refer to the above sum with $a_i$ being
replaced by $G_i$ for each $0 \le i \le n$.
Suppose we are given some \textit{nonzero} real numbers $u_1, u_2,
\ldots,
u_k$ for some fixed $k \ge1$ such that $u_1^2 + u_2^2+ \cdots+ u_k^2
< 1$. Fix any $\eta>0 $ small such that $\eta< |u_i|$ for each $i$ and
if we set $u_s':=|u_s|-\eta>0$ for $ 1\le s \le k$, then $(u_1')^2+
\cdots+ (u_k')^2 <1$. Take $p = p(n):= 100\lceil\log n\rceil^3, b =
b(n):= 12\lceil\log n \rceil^3$ and $N= N(n):= \lfloor n/ 2p
\rfloor
$. For each $1 \le j \le N$, define the intervals
\[
I_j = %
\cases{(-\eps,\eps), &\quad $\mbox{for } -b+1\le j\le0,$
\vspace*{2pt}
\cr
(u_j-\eta,u_j+\eta), &\quad $\mbox{for } 1\le j
\le k,$\vspace*{2pt}
\cr
(-\eps,\eps), &\quad $\mbox{for } k+1\le j\le k+b,$}
\]
and let $A_i$ be the event that $d_{ip+j}\in\sqrt{2\log n} I_j$ for
all $j=-b+1,\ldots, b+k$.

%$d_{ip+1}(a) \in\Big( (u_1 -\eta)\sqrt{2 \log n}, (u_1+\eta)\sqrt{2
%n}, (u_k+\eta)\sqrt{2 \log n}\Big)$ and $|d_j(a)| \le\eps\sqrt{2
%ip-2, \ldots, ip-b$.

%pr15 #&#
\begin{proposition} \label{propsecondmoment}
Let $A_i $ be as above. Then as $n \to\infty$, the probability that at
least one of the events $A_1, A_2, \ldots, A_N$ happens converges to one.
\end{proposition}

\begin{pf}
The proof is based on the second moment method. First of all, fix any
smooth function $\psi\dvtx\mathbb R \to[0, 1]$ such that $\psi( x)
= 0 $
for $x \le0$ and $\psi(x) = 1$ for $x \in[1, \infty)$.
For any reals $a<b$, the indicator function of the interval $(a \sqrt{2
\log n}, b\sqrt{2 \log n}) $ can be bounded below by the smooth function
\[
\1_{ (a \sqrt{2 \log n}, b\sqrt{2 \log n})}(x) \ge\zeta_{(a,
b)}(x):= \psi( x -a\sqrt{2 \log n} )
\psi( b\sqrt{2 \log n} -x ).
\]
For a fixed $a<b$, the first, second and third derivatives of the
function $\zeta_{(a, b)}$ are all bounded by some constant in the
supremum norm.
Define, for each $i$ in $1 \le i \le N$,
%
%
%e21 #&#
\begin{equation}
\label{defWi} W_i = \prod
_{ j=-b+1}^{b+k} \zeta_{I_j}(d_{ip+j}).
\end{equation}
Clearly, for each $i$, $ W_i \le\1_{A_i}$. Thus by the Paley--Zygmund
inequality,
\[
\prob\Biggl\{ \sum_{ i=1}^N
\1_{ A_i} \ge1\Biggr\} \ge\prob\Biggl\{ \sum
_{ i=1}^N W_i >0\Biggr\} \ge
\frac{\E[(\sum_{ i=1}^N W_i)^2 ] }{(\E\sum_{ i=1}^N W_i )^2}.
\]
So, the proposition follows if we can show
\[
\E\Biggl[\Biggl(\sum_{ i=1}^N
W_i\Biggr)^2 \Biggr] = \bigl(1+o(1)\bigr) \Biggl(\E\sum
_{ i=1}^N W_i
\Biggr)^2.
\]
The rest of the proof is devoted to establishing this.

We will write $W_i^G$ to denote the random variable corresponding to
\eqref{defWi} with $d_j(G)$ in place $d_j(a)$ for all $j$. Recall
that $(d_j(G))_{ 0< j< n}$ is a sequence of i.i.d. standard Gaussian
random variables. Using the following well-known Gaussian tail estimates,
\[
\frac{x}{x^2+1} \frac{e^{-x^2/2}}{\sqrt{2 \pi} } \le\prob\{ G_0 >
x\} \le
\frac{1}{x} \frac{e^{-x^2/2}}{\sqrt{2 \pi} },\qquad x>0,
\]
we obtain, for all large $n$,
%
%
%e22 #&#
\begin{eqnarray}\label{gassianEestimate}
\E W_i^G &= &\prod_{j=- b+1}^{b+k}
\E\zeta_{I_j}\bigl(d_{ip+j}(G)\bigr)\nonumber\\
& =& \bigl(1 - O
\bigl(n^{-\eps^2}\bigr)\bigr)^{2b } \prod
_{ s=1}^k \Omega\biggl( \frac{1}{
\sqrt{\log n} \cdot n^{u_s'^2}} \biggr)
\\
&=& \Omega\bigl( (\log n)^{-k/2} \cdot n^{-((u'_1)^2+ \cdots+
(u'_k)^2)} \bigr),\nonumber
\end{eqnarray}
uniformly over $1 \le i \le N$, which implies that $ \sum_{i=1}^N\E
W_i^G \to\infty$ as $n \to\infty$.
%Similarly, $ \E[ (W_i^G)^2] = O \left( n^{-((u'_1)^2+ \ldots+
%(u'_k)^2)} \right)$.
Since the random variables $W_i^G$ are bounded by $1$, we have
%
%
%e23 #&#
\begin{eqnarray}\label{eqnegvar}
\sum_{i=1}^N\operatorname{Var}
W_i^G& \le&\sum_{i=1}^N
\E\bigl[ \bigl(W_i^G\bigr)^2\bigr] \le\sum
_{i=1}^N\E W_i^G
\nonumber
\\[-8pt]
\\[-8pt]
\nonumber
&=& o \Biggl( \Biggl(\sum_{i=1}^N\E
W_i^G \Biggr)^2 \Biggr).
\end{eqnarray}
For any $1 \le i \ne i'\le N$, the two subsets of indices $ \{ ip-b+1,
\ldots, ip +\break k+b \}$ and $ \{ i'p-b+1, \ldots, i'p + k+b \}$ are
disjoint. Hence, $W_i^G$ and $W_{i'}^G$ are independent of each other
and therefore,
%
%
%e24 #&#
\begin{equation}
\E\bigl[W_i^G W_{i'}^G\bigr]
= \E\bigl[W_i^G\bigr] \E\bigl[ W_{i'}^G
\bigr]. \label{gassianCovestimate}
\end{equation}
%
%= \Omega\left( (\log n)^{-k} \cdot n^{-2((u'_1)^2+ \ldots+ (u'_k)^2)}
Now \eqref{eqnegvar} and \eqref{gassianCovestimate} yield
%
%
%e25 #&#
\begin{equation}
\label{Gaussiansecondmoment} \E\Biggl[\Biggl(\sum
_{ i=1}^N W_i^G
\Biggr)^2 \Biggr] = \bigl(1+o(1)\bigr) \Biggl(\E\sum
_{ i=1}^N W_i^G
\Biggr)^2.
\end{equation}

Our next goal is to show that the differences $ \E W_i - \E W_i^G $
and $\E[W_i W_{i'}] - \E[W_i^G W_{i'}^G]$ are of smaller order for
each $i \ne i' $ using the invariance principle given in Lemma~\ref
{leminvariance}.

We apply Lemma~\ref{leminvariance} with $r = n+1, m= k+2b, \bX= (a_0,
\ldots, a_n)$, $\bY= (G_0, \ldots, G_n), f= ( d_{ip-b+1}, \ldots, d_{
ip + k+b} ) $ and
\[
g(\mathbf{z}) = \prod_{ j=-b+1}^{b+k}
\zeta_{I_j}(z_{b+j}),
\]
where $\mathbf{z}= (z_1, z_2, \ldots, z_{2b+k} )$ to obtain
%
%
%e26 #&#
\begin{equation}
\label{invremainder} \bigl| \E[W_i ] - \E
\bigl[W_i^G\bigr] \bigr| \le\sum
_{j=1}^r \E[ R_j ] + \sum
_{j=1}^r \E[T_j],
\end{equation}
where $R_j$ and $T_j$ are as defined in Lemma~\ref{leminvariance} with
%
%
%e27 #&#
\begin{eqnarray}
\label{eqhj} h_j(\mathbf{x}) &= &\sum
_{ \ell, p, q = 1}^m \partial_\ell\,
\partial_p \,\partial_q g\bigl(f(\mathbf{x})\bigr)
\,\partial_j f_\ell(\mathbf{x}) \,\partial_j
f_p(\mathbf{x})\, \partial_j f_q(\mathbf{x})
\nonumber\\
&&{}+ 3 \sum_{ \ell, p= 1}^m \partial_\ell\,
\partial_p g\bigl(f(\mathbf{x})\bigr) \,\partial^2_j
f_\ell(\mathbf{x})\, \partial_j f_p(\mathbf{x})\\
&&{} + \sum
_{ \ell= 1}^m \partial_\ell g
\bigl(f(\mathbf{x})\bigr) \,\partial^3_j f_\ell(\mathbf{x}).
\nonumber
\end{eqnarray}
We will first find an upper bound for $\E[R_j]$. The bound for $\E[
T_j]$ will be similar. Note that $\| \partial_\ell f_{t} \|_\infty\le
2 n^{-1/2}$ for each $\ell$ and $t$. Since each function $f_t$ is
linear, its higher derivatives all vanish and hence the second and the
third term of $h_j$ in \eqref{eqhj} disappear. Also, $ \| \partial
_\ell\,\partial_{\ell'} \,\partial_{\ell''} g\|_\infty= O(1)$ for each
$\ell, \ell'$ and $\ell''$ and $\partial_\ell\,\partial_{\ell'}\,
\partial
_{\ell''} g(\mathbf{z}) \ne0$ only if
$ z_{b+j} \in\sqrt{2 \log n} I_j$ for $j=1,\ldots, k$. For $x \in
\mathbb R$, define a random vector $\bZ^{(j)}(x):= ( a_0, \ldots,
a_{j-2}, x, G_{j}, \ldots, G_n)$. Therefore, we have
%
%
%e28 #&#
\begin{eqnarray} \label{eqremiander}
&&\E R_j \le\frac{c m^3}{n^{3/2}} \E\Bigl[ |a_{j-1}|^3
\sup_{ x:|x|\le|a_{j-1}| } \mathbf{1}\bigl\{ d_{ip+s}\bigl(
\bZ^{(j)}(x)\bigr)
\nonumber
\\[-8pt]
\\[-8pt]
\nonumber
&&\hspace*{97pt}\qquad \in\sqrt{2 \log n} I_s, 1 \le s \le k
\bigr\} \Bigr].
\end{eqnarray}
Note that $d_j$ depends on $a_{j-1}$ linearly with absolute coefficient
at most $\sqrt{2/n}$, and that the random variable $a_{j-1}$ is bounded
by $n^{1/\gamma}$. Thus the
supremum above is at most
\[
\mathbf{1} \bigl\{\bigl | d_{ip+s}\bigl(\bZ^{(j)}(0)\bigr)\bigr| \ge
\bigl(|u_s|-\eta\bigr) \sqrt{2 \log n} - \sqrt{2}n^{1/\gamma-1/2}, 1 \le s
\le
k \bigr\},
\]
which is independent of $a_{j-1}$.
Since $\E a_{j-1}^2 =1$, we have, with $u'_s=|u_s|-\eta$
%
%
%e29 #&#
\begin{eqnarray}\label{eqremainder2}
&&\eqref{eqremiander} \le\frac{cm^3}{n^{3/2-1/\gamma}}\prob\bigl\{
\bigl| d_{ip+s}\bigl(
\bZ^{(j)}(0)\bigr)\bigr| \ge u'_s \sqrt{2 \log n} -
\sqrt{2}n^{1/\gamma-1/2},
\nonumber
\\[-8pt]
\\[-8pt]
\nonumber
&&\hspace*{222pt}\qquad 1 \le s \le k \bigr\}.
\end{eqnarray}
Note that if we truncate the random variables $G_j, \ldots, G_n$ at
level $n^{1/\gamma}$, then we can bound \eqref{eqremainder2} using
Bernstein's inequality exactly as we did in proving Lemma~\ref
{uppertailbd}. Toward this end, we define
\[
\hat\bZ^{(j)} = ( a_0, \ldots, a_{j-2}, 0,
G_{j} \1_{\{ |G_{j}| \le
n^{1/\gamma} \}}, \ldots, G_n\1_{\{ |G_{n}| \le n^{1/\gamma} \}}).
\]
Then
%
%
%e30 #&#
\begin{eqnarray}\label{eqRestimate}
\eqref{eqremainder2} &\le&\frac{cm^3}{n^{3/2-1/\gamma}} \bigl[
\prob\bigl\{ \bigl|
d_{ip+s}\bigl(\hat\bZ^{(j)}\bigr)\bigr| \ge u'_s
\sqrt{2 \log n} - \sqrt{2}n^{1/\gamma-1/2}, 1 \le s \le k \bigr\}
\nonumber
\\
&&\hspace*{161pt}{}+ \prob\bigl\{ |G_\ell| > n^{1/\gamma} \mbox{ for some }
\ell\bigr\} \bigr]
\\
&\le&\frac{cm^3}{n^{1/2-1/\gamma}} \bigl[ O\bigl( n^{-((u'_1)^2+
\cdots+
(u'_k)^2)}\bigr) + O \bigl( n \exp
\bigl( - n^{2/\gamma}/2 \bigr) \bigr) \bigr].\nonumber
\end{eqnarray}
Hence, by combining \eqref{eqremiander}, \eqref{eqremainder2} and
\eqref{eqRestimate}, we obtain
\[
\sum_{j=1}^r \E[ R_j ] \le
O\bigl((\log n)^9 n^{1/\gamma-1/2} \cdot n^{-((u'_1)^2+ \cdots+
(u'_k)^2)}\bigr),
\]
where the constant hidden inside the big-$O$ notation above does not
depend on $i$. Similar computation yields the same asymptotic bound for
$\sum_{j=1}^r \E[ T_j ] $. Therefore, by \eqref{invremainder} and
\eqref{gassianEestimate}, we have
%
%
%e31 #&#
\begin{eqnarray}
\label{eqEdiffneg} \E[W_i ] &=& \E
\bigl[W_i^G \bigr] + O\bigl((\log n)^9
n^{1/\gamma-1/2} \cdot n^{-((u'_1)^2+ \cdots+ (u'_k)^2)}\bigr)
\nonumber
\\[-8pt]
\\[-8pt]
\nonumber
&=&\bigl(1+o(1)\bigr) \E
\bigl[W_i^G\bigr],
\end{eqnarray}
which hold uniformly in $1 \le i \le N$.
By similar argument as above, we can also show that
\enlargethispage{3pt}
%
%e32 #&#
\begin{eqnarray}
\label{eqCovdiffneg} \E[W_i
W_{i'} ] &=& \E\bigl[W_i^G
W_{i'}^G \bigr] + O\bigl((\log n)^9
n^{1/\gamma
-1/2} \cdot n^{-2((u'_1)^2+ \cdots+ (u'_k))^2}\bigr)
\nonumber
\\[-8pt]
\\[-8pt]
\nonumber
&=&\bigl(1+o(1)\bigr) \E\bigl[W_i^G
W_{i'}^G\bigr],
\end{eqnarray}
uniformly in $1 \le i \ne i' \le N$. From \eqref{eqEdiffneg} arguing
similarly as we did in \eqref{eqnegvar}, we deduce
%
%
%e33 #&#
\begin{equation}
\label{eqvarneggeneral} \sum
_{i=1}^N\operatorname{Var} W_i \le o \Biggl(\Biggl(\sum_{i=1}^N\E
W_i \Biggr)^2 \Biggr).
\end{equation}
Finally, combining \eqref{gassianCovestimate}, \eqref
{eqCovdiffneg} and \eqref{eqvarneggeneral} together, we have
\[
\sum_{ i, i' = 1}^N \E[W_i
W_{i'}] = \bigl( 1+ o(1)\bigr) \Biggl(\sum
_{ i =1}^N \E[W_i]
\Biggr)^2.
\]
%
%&= ( 1+ o(1)) \left(\sum_{ i =1}^N \E[W_i^G] \right)^2 [
%&= ( 1+ o(1)) \left(\sum_{ i =1}^N \E[W_i] \right)^2 [\mbox{by
This implies that $\prob\{ \sum_{ i=1}^N \1_{A_i} \ge1 \} = 1
- o(1)$ which completes the proof.~%
\end{pf}

For any finite $k \ge1$, we write $\Pi_k$ as a shorthand for $k
\times
k$ matrix $\Pi[\{1,2,\break \ldots,  k\}]$. Arguing along the line of the
proof of the fact $ \|\Pi\|_{2 \to4}^2 = \sup\{ \langle\mathbf{v},\break
\Pi
\operatorname{diag}(\delta) \Pi\mathbf{v}\rangle\dvtx\delta\in
\ell^2(\mathbb R), \|
\delta\|_2 \le1, \mathbf{v}\in\ell^2(\mathbb C), \|\mathbf{v}\|_2
\le1\}$, we
can also show that
%
%
%e34 #&#
\begin{equation}
\label{eqPiknorm} \|\Pi_k
\|_{2 \to4}^2 = \sup\bigl\{ \lambda_1\bigl(
\Pi_k \operatorname{diag}(\delta) \Pi_k\bigr) \dvtx\delta\in
\mathbb
R^k, \|\delta\|_2 \le1\bigr\}.
\end{equation}
Next we prove
%
%le16 #&#
\begin{lemma} \label{Lemoperatornormconv}
$\lim_{k \to\infty} \|\Pi_k\|_{2 \to4} = \|\Pi\|_{2 \to4} $.
\end{lemma}

\begin{pf} Since the operators $\psi, \psi^{-1}, \chi_{[0,1/2]}$ are
all bounded and the inclusion map $\iota\dvtx\ell^2(\mathbb C) \to
\ell
^4(\mathbb C)$ is also a bounded operator, we have \mbox{$\|\Pi\|_{2 \to4}
<\infty$}.

%Clearly, $\|\Pi_k\|_{2 \to4} $ are nondecreasing and
%$\|\Pi_k\|_{2 \to4} \le\|\Pi\|_{2 \to4} $ for all $k \ge1$.
%
%Let $\eps>0$, and let $v$ be a unit vector so that $\|\Pi
%v\|_4>\|\Pi\|_{2\to4}-\eps$. Let $v_k$ be unit vectors supported
%on $\{1,\ldots,k\}$ so that $\|v_k-v\|_2\to0$. Then $$|\|\Pi
%v_k\|_4-\|\Pi v\|_4|\le\|\Pi(v_k-v)\|_4\to0$$ and
%$\|\Pi(v_k)\|_4=\|\Pi_k(v_k)\|_4$, so $\liminf\|\Pi_k\|_{2\to4}
%

It will be convenient to think of $\Pi_{2k+1}$ as a linear operator
acting on the space $\ell^2(\mathbb C)$ with the representation $\Pi
_{2k+1}(i, j) = \Pi(i, j)$ for $|i|,|j|\le k$ and $0$ otherwise.
Clearly, $\|\Pi_k\|_{2\to4}$ is increasing and bounded above by $\|
\Pi
\|_{2\to4}$.

For the other direction, consider a sequence of unit vectors $\mathbf
{v}_m\in
\ell^2(\mathbb C)$ supported on $[-m,m]$ so that $\|\Pi\mathbf{v}_m\|
_4\to\|
\Pi\|_{2\to4}$. Then for $k\ge m$ we have
\[
\|\Pi\mathbf{v}_m\|_4^4 - \|\Pi_{2k+1}
\mathbf{v}_m \|^4_{4} = \sum
_{i: |i|>k} \bigl|(\Pi\mathbf{v}_m) (i)\bigr|^4 = \sum
_{i: |i|>k} \biggl\llvert\sum
_{ j: |j| \le m} \Pi(i, j)\mathbf{v}_m(j) \biggr\rrvert
^4.
\]
Since $|\Pi(i, j)| \le|i-j|^{-1}$ and $|\mathbf{v}_m(j)| \le1$, the inside
sum is bounded above by $(2m+1)(|i|-m)^{-1}$. This gives the upper bound
\[
(2m+1)^4\sum_{\ell: |\ell| > k-m} \frac{1}{\ell^4}
\to0 \qquad \mbox{as } k\to\infty.
\]
Letting $k\to\infty$ and then $m\to\infty$ completes the proof.
\end{pf}\eject

Given $\tau>0$, by \eqref{eqPiknorm} and Lemma~\ref
{Lemoperatornormconv}, we can find $k \ge1$ sufficiently large and
a vector $\mathbf{u}= (u_1, u_2, \ldots, u_k) \in\mathbb R^k$ with
$\| \mathbf{u}\|
_2< 1$ such that\break $ \lambda_1(\Pi_k \operatorname{diag}(\mathbf{u})
\Pi_k) > \|\Pi\|
^2_{2 \to4} -\tau/2$. By perturbing the coordinates a little, if
necessary, we can also assume $u_s \ne0$ for each $ 1\le s \le k$. Now
choose $\eta>0$ sufficiently small such that:
\begin{longlist}[(1)]
\item[(1)] $0 \notin(u_s -\eta, u_s+ \eta)$ for each $ 1 \le s \le k$.
\item[(2)] If $\mathbf{v}= (v_1, v_2, \ldots, v_k) \in[u_1 -\eta,
u_1+ \eta]
\times\cdots\times[u_k - \eta, u_k + \eta]$, then $ \lambda_1(\Pi_k
\operatorname{diag}(\mathbf{v}) \Pi_k) > \|\Pi\|^2_{2 \to4} -\tau$.
\item[(3)] $\sup\{ \| \mathbf{v}\|_2 \dvtx\mathbf{v}\in[u_1 -\eta
, u_1+ \eta] \times
\cdots\times[u_k - \eta, u_k + \eta] \} < 1$.
\end{longlist}
Finally choose $\eps>0$ small such that $[-\eps, \eps] \cap(u_s
-\eta,
u_s+ \eta) = \varnothing$ for each $ 1 \le s \le k$.

By Proposition~\ref{propsecondmoment} we know that one of the events
$A_i, 1 \le i \le N $ happens with high probability.
If $A_i$ occurs, then the points $ip-\lceil\log n \rceil^3, \ldots, ip
+ k+\lceil\log n \rceil^3$ are contained in a single partition block
$J\in\Lambda$. This is because when $A_i$ occurs, $d_{ip +j} \in(u_s
- \eta, u_s +\eta)$ and hence $|d_{ip+j}| > \eps$ for each
$j=1,\ldots
,k$ and because of the property of our random partition which
guarantees that each (random) partition block always has a padding of
two invisible bricks (of length between $\lceil\log n \rceil^3$ and
$4 \lceil\log n \rceil^3$) from each side. On the other hand, since
two consecutive invisible bricks cannot belong to the same partitioning
block, $J$ has no other point from $S$ except the $k$ points $ip +1,
ip+2, \ldots, ip+k$. Write $F: = \{ip +1, ip+2, \ldots, ip+k\}$.
Therefore, if $A_i$ happens, then
\begin{eqnarray*}
\hspace*{-4pt}&&\frac{\lambda_1( \bP[J ] \bD^{\eps} [J] \bP[J ])} {\sqrt{2\log
n} } \\
\hspace*{-4pt}&&\qquad\ge\frac{\lambda_1( \bP[F] \bD^{\eps} [F] \bP[F ])} {\sqrt
{2\log n}
}
\\
\hspace*{-4pt}&&\qquad \ge\inf\bigl\{ \lambda_1 \bigl( \bP[F] \operatorname
{diag}(\mathbf{v}) \bP[F ]
\bigr)\dvtx\mathbf{v}\in[u_1 -\eta, u_1+ \eta] \times\cdots
\times[u_k - \eta, u_k + \eta] \bigr\}.
\end{eqnarray*}
By the convergence \eqref{eqrateofconv} of $\bP$ to $\Pi$ this equals
\begin{eqnarray*}
&&\inf\bigl\{ \lambda_1 \bigl( \Pi_k \operatorname{diag}(\mathbf{v})
\Pi_k \bigr)\dvtx\mathbf{v}\in[u_1 -\eta, u_1+ \eta]
\times\cdots\times[u_k - \eta, u_k + \eta] \bigr\} - O
\bigl(n^{-1}\bigr)
\\
&&\qquad \ge\| \Pi_k \|^2_{2 \to4} - \tau- O
\bigl(n^{-1}\bigr).
\end{eqnarray*}
So, by Lemma~\ref{Lemoperatornormconv} we get with high probability,
\[
\max_{ J\in\Lambda: J\cap S \ne\varnothing} \frac{\lambda_1( \bP
[J ]
\bD^{\eps} [J] \bP[J ])} {\sqrt{2\log n} } \ge\| \Pi_k
\|^2_{2 \to4} - \tau- O\bigl(n^{-1}\bigr)= \| \Pi
\|^2_{2 \to4} - \tau-o (1).
\]

This yields the claim of Proposition~\ref{prolowerbound}.
%
%Lemma~\ref{lemtightness}, it suffices to show that

%sA #&#
\begin{appendix}\label{app}
\section*{Appendix}

%sA.1 #&#
\subsection{Invariance principle}

Let $\bX= (X_1, \ldots, X_r)$ and $\bY= (Y_1, \ldots, Y_r)$ be two
vectors of independent random variables with finite second moments,
taking values in some open interval $I$\vadjust{\goodbreak} and satisfying, for each $i$,
$\E[X_i] = \E[Y_i]$ and $\E[X_i^2] = \E[Y_i^2]$. We shall also assume
that $\bX$ and $\bY$ are defined on the same probability space and are
independent. The following lemma is an immediate generalization of
Theorem~1.1 of \citet{chatterjee05} (based on Lindeberg's approach to
the CLT). We need this more detailed version because we will use the
invariance principle under the moderate deviation regime.\looseness=-1
%
%le17 #&#
\begin{lemma} \label{leminvariance}
Let $f = (f_1, f_2, \ldots, f_m)\dvtx I^r \to\mathbb R^m$ be thrice
continuously differentiable. If we set $\bU= f(\bX)$ and
$\bV=f(\bY)$, then for any thrice continuously differentiable $g\dvtx
\mathbb R^m \to\mathbb R$,
\[
\bigl| \E\bigl[g(\bU)\bigr] - \E\bigl[g(\bV)\bigr] \bigr| \le\sum
_{i=1}^r \E[ R_i ] + \sum
_{i=1}^r \E[T_i],
\]
where
\begin{eqnarray*}
R_i &:=& \frac{1}{6} |X_i|^3
\times\sup_{ x \in[\min(0, X_i), \max(0,
X_i)] } \bigl| h_i( X_1, \ldots,
X_{i-1}, x, Y_{i+1}, \ldots, Y_r)\bigr|,
\\
T_i &:=& \frac{1}{6} |Y_i|^3
\times\sup_{ y \in[\min(0, Y_i), \max(0,
Y_i)] } \bigl|h_i( X_1, \ldots,
X_{i-1}, y, Y_{i+1}, \ldots, Y_r)\bigr|,
\\
h_i(\mathbf{x}) &:= &\sum_{ \ell, p, q = 1}^m
\partial_\ell\,\partial_p \,\partial_q g\bigl(f(
\mathbf{x})\bigr) \partial_i f_\ell(\mathbf{x}) \,\partial_i
f_p(\mathbf{x}) \,\partial_i f_q(\mathbf{x})
\\
&&{}+ 3 \sum_{ \ell, p= 1}^m \partial_\ell\,
\partial_p g\bigl(f(\mathbf{x})\bigr) \,\partial^2_i
f_\ell(\mathbf{x}) \,\partial_i f_p(\mathbf{x}) + \sum
_{ \ell= 1}^m \,\partial_\ell g
\bigl(f(\mathbf{x})\bigr) \,\partial^3_i f_\ell(\mathbf{x}).
\end{eqnarray*}
\end{lemma}

\begin{pf}
The lemma can be proved by merely imitating the steps of \citeauthor{chatterjee05} [(\citeyear{chatterjee05}), Theorem~1.1],
but we include a proof here for sake of completeness.
Let $H\dvtx I^r \to\mathbb R$ be the function $H(\mathbf{x}):= g (
f(\mathbf{x}))$. It
is a routine computation to verify that $ \partial_i^3 H(\mathbf{x})
= h_i(\mathbf{x}
)$ for all $i$.
For $0 \le i \le r$, define the random vectors $\bZ_i:= (X_1, X_2,
\ldots, X_{i-1}, X_i, Y_{i+1}, \ldots, Y_n)$ and $\bW_i:= (X_1, X_2,
\ldots, X_{i-1}, 0, Y_{i+1}, \ldots,  Y_n)$ with obvious meanings for $i
= 0$ and $i =r$. For $1 \le i \le r$, define
\[
\err^{(1)}_i= H(\bZ_i) - X_i\,
\partial_i H(\bW_i) - \tfrac12 X_i^2\,
\partial_i H(\bW_i)
\]
and
\[
\err^{(2)}_i= H(\bZ_{i-1}) - Y_i\,
\partial_i H(\bW_i) - \tfrac12 Y_i^2\,
\partial_i H(\bW_i).
\]
Hence, by Taylor's remainder theorem and the above observation about
the third partial derivatives of $H$, it follows that
\[
\bigl|\err^{(1)}_i\bigr| \le R_i\quad \mbox{and} \quad\bigl|
\err^{(2)}_i\bigr| \le T_i.
\]
For each $i$, $X_i$, $Y_i$ and $\bW_i$ are independent. Hence,
\[
\E\bigl[ X_i\, \partial_i H(\bW_i)\bigr]
- \E\bigl[Y_i\, \partial_i H(\bW_i)\bigr]
= \E[ X_i - Y_i ] \cdot\E\bigl[H(\bW_i)
\bigr] = 0.\vadjust{\goodbreak}
\]
Similarly, $\E[X_i^2 \,\partial_i H(\bW_i)] = \E[Y_i^2\, \partial_i
H(\bW
_i)]$. Combining all these ingredients, we obtain
\begin{eqnarray*}
\bigl|\E\bigl[g(\bU)\bigr] - \E\bigl[g(\bV)\bigr] \bigr| &=& \Biggl| \sum
_{i=1}^r \bigl( \E\bigl[H(\bZ_i)
\bigr] - \E\bigl[H(\bZ_{i-1})\bigr] \bigr) \Biggr|
\\
&\le&\Biggl| \sum_{i=1}^r \E\bigl[
X_i \,\partial_i H(\bW_i)\bigr] + \frac12
X_i^2 \,\partial_i H(\bW_i)+
\err^{(1)}_i
\\
&&{}- \sum_{i=1}^r \E\bigl[
Y_i \,\partial_i H(\bW_i)\bigr] + \frac12
Y_i^2 \,\partial_i H(\bW_i)+
\err^{(2)}_i \Biggr |
\\
& \le&\sum_{i=1}^r \E[ R_i
] + \sum_{i=1}^r \E[T_i],
\end{eqnarray*}
which completes the proof.
\end{pf}

%sA.2 #&#
\subsection{Covariances between the eigenvalues of random circulant}

%le18 #&#
\begin{lemma} \label{lemdjmeancov}
Let $a_0, a_1, \ldots, a_n$ be independent mean zero, variance one
random variables. Define
\[
d_j = \frac{1}{\sqrt{2n}} \Biggl[ \sqrt{2} a_0 +
(-1)^j\sqrt{2} a_n + 2\sum_{k=1}^{n-1}
a_k \cos\biggl( \frac{2 \pi jk}{2n} \biggr) \Biggr],\qquad 0 \le j < 2n.
\]
Then $d_j = d_{2n-j}$ for $0< j< 2n$. Moreover, the random variables
$d_j, 0 \le j \le n$ have mean $0$, and their covariances are given by
\[
\E[d_{j} d_{k} ] = \cases{ %
2, &\quad
$\mbox{if } j =k \in\{0, n\},$
\vspace*{2pt}\cr
1,& \quad$\mbox{if } 0< j=k< n,$
\vspace*{2pt}\cr
0, &\quad $\mbox{if } 0 \le j \ne k \le n.$}
\]
\end{lemma}
\begin{pf}
The fact that $d_{j} = d_{2n-j}$ for $0 < j< 2n$ and zero mean property
is immediate from the definition of $d_j$. Since $a_0, a_1, \ldots,
a_n$ be independent with variance one,
\begin{eqnarray*}
\E[d_j d_k ] &=& \frac{1}{2n} \Biggl[ 2+
2(-1)^{j+k} + 4\sum_{\ell=1}^{n-1}
\cos\biggl( \frac{2 \pi j\ell}{2n} \biggr) \cos\biggl( \frac{2
\pi k\ell}{2n} \biggr)
\Biggr]
\\
&=& \frac{1}{2n} \Biggl[ 2+ 2(-1)^{j+k} + 2\sum
_{\ell=1}^{n-1} \cos\biggl( \frac{2 \pi(j-k)\ell}{2n} \biggr) \\
&&\hspace*{82pt}{}+
2\sum_{\ell=1}^{n-1}\cos\biggl(
\frac{2 \pi(j+k)\ell}{2n} \biggr) \Biggr].
\end{eqnarray*}
Plugging in $x = \frac{2 \pi m}{2n}, m = 0, 1, 2, \ldots, 2n$ into the
well-known Dirichlet kernel formula,
\[
1+ 2\sum_{ \ell=1}^{n-1} \cos(\ell x) =
\frac{ \sin((n-1/2)x) }{\sin(x/2)},
\]
we obtain
\[
\sum_{ \ell=1}^{n-1} \cos\biggl(
\frac{2 \pi m \ell}{2n} \biggr) = -\frac
{1+ (-1)^{m}}{2},
\]
unless $m=0$ or $2n$ when the sum equals to $(n-1)$. Using the above
formula, the covariances $\E[d_j d_k] $ can be easily computed.
\end{pf}

%sA.3 #&#
\subsection{Optimization problem and connection to the sine kernel}
For any $\delta>0$, let $\upsilon_{\delta}$ be the indicator function
$\1_{[-\delta/2, \delta/2]}$. For a complex valued function $f$ defined
on $\mathbb R$, we define the involution $f^*$ by $f^*(x) = \overline
{f(-x)}$. Given two functions $f$ and $g$ on $\mathbb R$ their
convolution $f \star g$ is defined as $(f \star g)(x) = \int_{\mathbb
R} f(x-y) g(y) \,dy$ provided the integral makes sense. Let $f \star_\T
g(x) = \int_{-1/2}^{1/2} \tilde f(x-y) \tilde g(y) \,dy $ denote the
convolution of the two functions $f$ and $g$ are in $L^2(\T)$ where
$\tilde f$ and $\tilde g$ are the periodic extension of $f$ and $g$,
respectively, on the whole real line. Let
$\hat f(t) = \int_{\mathbb R} e^{ -2 \pi\ci x t} f(x)\,dx$ be the usual
Fourier transform of $f \in L^2(\mathbb R)$. Let $\psi^{-1}$ be the
discrete Fourier transform of from $L^2(\T)$ to $\ell^2(\mathbb C)$.
Below we collect some basic facts of the usual and discrete Fourier
transform which we will need later:
\begin{longlist}[(1)]
\item[(1)] $\psi^{-1}\dvtx L^2(\T) \to\ell^2(\mathbb C)$ and $\hat
{} \dvtx L^2(\mathbb R) \to L^2(\mathbb R)$ are isometries.
\item[(2)] $\overline{\psi^{-1}(f)} = \psi^{-1}(f^*)$ and
\hspace*{3pt}$\overline
{\hspace*{-3pt}\widehat f} = \widehat{f^*}$.
\item[(3)] $\psi^{-1}( f \star_\T g)(k) = \psi^{-1}(f)(k)\psi
^{-1}(g)(k)$ for all $k \in\mathbb Z$ and for all $f, g \in L^2(\T)$.
When $f, g \in L^2(\T)$, then $\widehat{f \star g} = \hat f \cdot
\hat g$.
\item[(4)] If $f$ and $g$ are supported on $[0, 1/2]$, then $f\star
_\T
g = f\star g$.
\item[(5)] $ \hat\ups_1 (t) = \frac{\sin(\pi t )}{\pi t}$.
\end{longlist}
Note that $ \Sin(f)(x) = \hat\ups_1 \star f(x) \mbox{ for } f \in
L^2(\mathbb R)$.
The next lemma establishes the connection between the $2 \to4$ norm of
the operator $\Pi$ (defined in Section~\ref{subsecpropP}) and the
$2 \to4$ norm of the integral operator $\Sin$.
%
%le19 #&#
\begin{lemma} \label{lemsinepiconn}
The following holds true:
\[
\| \Pi\|_{2 \to4}^2 = \frac{1}{\sqrt2} \| \mbox{\em$\Sin$}
\|_{2 \to4}^2.
\]
\end{lemma}
Let us define for $\delta>0$,
%
%
%eA.1 #&#
\begin{equation}
\label{optprob} K_\delta:= \sup\bigl\{ \bigl\| (f
\ups_{\delta})^* \star(f \ups_{\delta}) \bigr\|_2\dvtx f \in
L^{2}(\mathbb R), \| f \|_2 \le1 \bigr\}.
\end{equation}
%
%le20 #&#
\begin{lemma} \label{lscalingKdelta}
Let $K_\delta$ be as above. Then $K_\delta= \delta^{1/2} K_1$.
\end{lemma}
\begin{pf}%{Proof of Lemma~\ref{lscalingKdelta}}
For any $f \in
L^2(\mathbb R)$,
\[
\bigl\| (f \ups_{\delta})^* \star(f \ups_{\delta})
\bigr\|^2_2 = \int\biggl\llvert\int f \ups_\delta(x+t)
\overline{f \ups_\delta(t)} \,dt \biggr\rrvert^2 \,dx.
\]
After a change of variables $ s = t/\delta$ and $y = x /\delta$ the
above integral is same as
%
%
%eA.2 #&#
\begin{equation}
\label{scalingcrossCorr} \delta^{3} \int\biggl\llvert
\int f \ups_\delta\bigl(\delta(s+ y) \bigr) \overline{f
\ups_\delta(\delta s)} \,ds \biggr\rrvert^2 \,dy.
\end{equation}
Keeping in mind that $\ups_\delta( \delta s) = \ups_1(s)$ and replacing
$f$ by $g(x):= \delta^{1/2} f(\delta x)$ in~\eqref{scalingcrossCorr},
we obtain
\[
\eqref{scalingcrossCorr} = \delta\int\biggl\llvert\int g
\ups_1 \bigl((s+ y) \bigr) \overline{g \ups_1( s)} \,ds
\biggr\rrvert^2 \,dy.
\]
Since $\|g\|_2 = \|f\|_2$, it follows that $K_\delta^2 = \delta K_1^2$
which completes the proof of the lemma.
\end{pf}

\begin{pf*}{Proof of Lemma~\ref{lemsinepiconn}}
The proof consists of a series of elementary steps. Let~$\kappa$ be the
indicator function $\1_{[0, 1/2]}$. Applying
the Fourier transform from $\ell^2(\mathbb{Z})$ to $L^2(\mathbb
T)$, we get
\begin{eqnarray*}
\| \Pi\|_{2 \to4}^2 &=& \sup\bigl\{ \| \overline{\Pi\mathbf{v}}
\odot\Pi\mathbf{v}\|_2\dvtx\mathbf{v}\in\ell^2(\mathbb Z), \|
\mathbf{v}
\|_2 \le1 \bigr\}
\\
&=& \sup\bigl\{ \bigl\| \overline{\psi^{-1}(f\cdot\kappa)} \odot\psi
^{-1}(f\cdot\kappa) \bigr\|_2\dvtx f \in L^{2}(\T), \| f
\|_2 \le1 \bigr\}.
\end{eqnarray*}
The properties of the Fourier transform further imply
\[
\overline{\psi^{-1}(f\cdot\kappa)} \odot\psi^{-1}(f\cdot
\kappa) = \psi^{-1}\bigl( (f\cdot\kappa)^*\bigr) \odot
\psi^{-1}(f\cdot\kappa)= \psi^{-1}\bigl( (f\cdot\kappa)^*
\star_\T(f\cdot\kappa)\bigr),
\]
and since we convolve functions supported on $[0,1/2]$, we might as
well do the entire optimization on the real line to get
\[
\| \Pi\|_{2 \to4}^2 = \sup\bigl\{\bigl \| (f\cdot\kappa)^*
\star(f\cdot\kappa) \bigr\|_2\dvtx f \in L^{2}(\mathbb R), \| f
\|_2 \le1 \bigr\}
.
\]
By translating the function $ f \in L^{2}(\mathbb R)$ via the map $f
\mapsto f(\cdot+ 1/4)$ in the above optimization problem, we see that
$\|\Pi\|_{2 \to4}^2 =K_{1/2}$. This equals $K_1/\sqrt{2}$ by
Lemma~\ref{lscalingKdelta}. Now note that
\[
\bigl\| (f \ups_{1})^* \star(f \ups_{1}) \bigr\|_2=\|
\overline{\widehat{f\ups_{1} }} \cdot\widehat{f
\ups_{1}} \|_2=\| \widehat{f \ups_{1}} \|
_4^2=\bigl\|\Sin(\hat f) \bigr\|_4^2
\]
and so
\[
K_1=\sup\bigl\{ \bigl\| \Sin(\hat f) \bigr\|_4^2\dvtx\hat f \in L^{2}(\mathbb
R), \| \hat f \|_2 \le1 \bigr\} =
\| \Sin\|_{2 \to4}^2.
\]
This completes the proof of the lemma.
\end{pf*}
As far as we know, the explicit value of constant $K_1$ of \eqref
{optprob} is not known in the literature. However, the maximization
problem given in~\eqref{optprob} has been studied in \citet{Garsia68}.
We list below some of the interesting results from \citet{Garsia68}:
\begin{itemize}
\item$K_1 = \sup\{ ( \int_{\mathbb R} (\int_{\mathbb R}
f(x+t) f(t) \,dt )^2 \,dx )^{1/2}\dvtx f \in\mathcal F \}$ where
$\mathcal F$ is the class of all real-valued functions $f$ satisfying
$f(x) \ge0, f(x) = f(-x)$ for all $x \in\mathbb R, f(x) \ge f(y)$
for $0 \le x \le y$ and $f(x) = 0$ for $|x| \ge1/2$ and $\int_{
-1/2}^{1/2} f^2(x) \,dx = 1$.
\item There exists a unique $f \in\mathcal F$ such that $ ( \int
_{\mathbb R} (\int_{\mathbb R} f(x+t) f(t) \,dt )^2 \,dx
)^{1/2}= K_1$.
\item$K_1^2 = 0.686981293033114600949413 \ldots!$
\end{itemize}
\end{appendix}

\section*{Acknowledgments}
A. Sen thanks Manjunath Krishnapur for pointing out the usefulness
of the invariance principle in the world of random matrices. Most
of the research has been conducted while A. Sen visited University
of Toronto and Technical University of Budapest in June 2010 and
July 2011, respectively. B.~Vir\'{a}g thanks M\'at\'e Matolcsi for many
interesting discussions about the final optimization problem. We
also thank him and Mihalis Kolountzakis for the reference
\citet{Garsia68}.
%
% imsref loaded by akundreckaite, 2013-09-02 10:26:21
%

\printaddresses


\begin{thebibliography}{19}
% BibTex style file: ims.bst, 2013-01-28
% Default style options (sort=0,type=number).
% Used options (sort=1,type=nameyear).

%b1 #&#
\bibitem[\protect\citeauthoryear{Adamczak}{2010}]{Adamczak10}
%
\begin{barticle}[mr]
\bauthor{\bsnm{Adamczak},~\bfnm{Rados{\l}aw}\binits{R.}}
(\byear{2010}).
\btitle{A few remarks on the operator norm of random {T}oeplitz matrices}.
\bjournal{J. Theoret. Probab.}
\bvolume{23}
\bpages{85--108}.
\bid{doi={10.1007/s10959-008-0201-7}, issn={0894-9840}, mr={2591905}}
\bptok{imsref}%
\end{barticle}
%
\endbibitem

%b2 #&#
\bibitem[\protect\citeauthoryear{Bai}{1999}]{Bai99}
%
\begin{barticle}[mr]
\bauthor{\bsnm{Bai},~\bfnm{Z.~D.}\binits{Z.~D.}}
(\byear{1999}).
\btitle{Methodologies in spectral analysis of large-dimensional random
matrices, a review}.
\bjournal{Statist. Sinica}
\bvolume{9}
\bpages{611--677}.
%rejoinder
% by the author}.
\bid{issn={1017-0405}, mr={1711663}}
\bptok{imsref}%
\end{barticle}
%
\endbibitem

%b3 #&#
\bibitem[\protect\citeauthoryear{Bai and Yin}{1988}]{Bai88}
%
\begin{barticle}[mr]
\bauthor{\bsnm{Bai},~\bfnm{Z.~D.}\binits{Z.~D.}} \AND
\bauthor{\bsnm{Yin},~\bfnm{Y.~Q.}\binits{Y.~Q.}}
(\byear{1988}).
\btitle{Necessary and sufficient conditions for almost sure
convergence of the
largest eigenvalue of a {W}igner matrix}.
\bjournal{Ann. Probab.}
\bvolume{16}
\bpages{1729--1741}.
\bid{issn={0091-1798}, mr={0958213}}
\bptok{imsref}%
\end{barticle}
%
\endbibitem

%b4 #&#
\bibitem[\protect\citeauthoryear{Bose and Sen}{2007}]{Bose07}
%
\begin{barticle}[mr]
\bauthor{\bsnm{Bose},~\bfnm{Arup}\binits{A.}} \AND
\bauthor{\bsnm{Sen},~\bfnm{Arnab}\binits{A.}}
(\byear{2007}).
\btitle{Spectral norm of random large dimensional noncentral
{T}oeplitz and
{H}ankel matrices}.
\bjournal{Electron. Commun. Probab.}
\bvolume{12}
\bpages{29--35 (electronic)}.
\bid{doi={10.1214/ECP.v12-1243}, issn={1083-589X}, mr={2284045}}
\bptok{imsref}%
\end{barticle}
%
\endbibitem

%b5 #&#
\bibitem[\protect\citeauthoryear{Bose and Sen}{2008}]{Bose08}
%
\begin{barticle}[mr]
\bauthor{\bsnm{Bose},~\bfnm{Arup}\binits{A.}} \AND
\bauthor{\bsnm{Sen},~\bfnm{Arnab}\binits{A.}}
(\byear{2008}).
\btitle{Another look at the moment method for large dimensional random
matrices}.
\bjournal{Electron. J. Probab.}
\bvolume{13}
\bpages{588--628}.
\bid{doi={10.1214/EJP.v13-501}, issn={1083-6489}, mr={2399292}}
\bptok{imsref}%
\end{barticle}
%
\endbibitem

%b6 #&#
\bibitem[\protect\citeauthoryear{Bose, Subhra~Hazra and Saha}{2010}]{Bose10}
%
\begin{barticle}[mr]
\bauthor{\bsnm{Bose},~\bfnm{Arup}\binits{A.}},
\bauthor{\bsnm{Subhra~Hazra},~\bfnm{Rajat}\binits{R.}} \AND
\bauthor{\bsnm{Saha},~\bfnm{Koushik}\binits{K.}}
(\byear{2010}).
\btitle{Spectral norm of circulant type matrices with heavy tailed entries}.
\bjournal{Electron. Commun. Probab.}
\bvolume{15}
\bpages{299--313}.
\bid{doi={10.1214/ECP.v15-1554}, issn={1083-589X}, mr={2670197}}
\bptok{imsref}%
\end{barticle}
%
\endbibitem

%b7 #&#
\bibitem[\protect\citeauthoryear{B{\"o}ttcher and Grudsky}{2000}]{Bottcher00}
%
\begin{bbook}[mr]
\bauthor{\bsnm{B{\"o}ttcher},~\bfnm{Albrecht}\binits{A.}} \AND
\bauthor{\bsnm{Grudsky},~\bfnm{Sergei~M.}\binits{S.~M.}}
(\byear{2000}).
\btitle{Toeplitz Matrices, Asymptotic Linear Algebra, and Functional Analysis}.
\bpublisher{Birkh\"auser}, \blocation{Basel}.
\bid{doi={10.1007/978-3-0348-8395-5}, mr={1772773}}
\bptok{imsref}%
\end{bbook}
%
\endbibitem

%b8 #&#
\bibitem[\protect\citeauthoryear{B{\"o}ttcher and
Silbermann}{1999}]{Bottcher99}
%
\begin{bbook}[mr]
\bauthor{\bsnm{B{\"o}ttcher},~\bfnm{Albrecht}\binits{A.}} \AND
\bauthor{\bsnm{Silbermann},~\bfnm{Bernd}\binits{B.}}
(\byear{1999}).
\btitle{Introduction to Large Truncated {T}oeplitz Matrices}.
\bpublisher{Springer}, \blocation{New York}.
\bid{doi={10.1007/978-1-4612-1426-7}, mr={1724795}}
\bptok{imsref}%
\end{bbook}
%
\endbibitem

%b9 #&#
\bibitem[\protect\citeauthoryear{B{\"o}ttcher and
Silbermann}{2006}]{Bottcher06}
%
\begin{bbook}[mr]
\bauthor{\bsnm{B{\"o}ttcher},~\bfnm{Albrecht}\binits{A.}} \AND
\bauthor{\bsnm{Silbermann},~\bfnm{Bernd}\binits{B.}}
(\byear{2006}).
\btitle{Analysis of {T}oeplitz Operators},
\bedition{2nd} ed.
\bpublisher{Springer}, \blocation{Berlin}.
\bid{mr={2223704}}
\bptok{imsref}%
\end{bbook}
%
\endbibitem

%b10 #&#
\bibitem[\protect\citeauthoryear{Bryc, Dembo and Jiang}{2006}]{Bryc06}
%
\begin{barticle}[mr]
\bauthor{\bsnm{Bryc},~\bfnm{W{\l}odzimierz}\binits{W.}},
\bauthor{\bsnm{Dembo},~\bfnm{Amir}\binits{A.}} \AND
\bauthor{\bsnm{Jiang},~\bfnm{Tiefeng}\binits{T.}}
(\byear{2006}).
\btitle{Spectral measure of large random {H}ankel, {M}arkov and {T}oeplitz
matrices}.
\bjournal{Ann. Probab.}
\bvolume{34}
\bpages{1--38}.
\bid{doi={10.1214/009117905000000495}, issn={0091-1798}, mr={2206341}}
\bptok{imsref}%
\end{barticle}
%
\endbibitem

%b11 #&#
\bibitem[\protect\citeauthoryear{Chatterjee}{2005}]{chatterjee05}
%
\begin{bmisc}[auto:STB|2013/06/05|13:45:01]
\bauthor{\bsnm{Chatterjee},~\bfnm{S.}\binits{S.}}
(\byear{2005}).
\bhowpublished{A simple invariance theorem. Unpublished manuscript.
Available at \arxivurl{arXiv:math/0508213}.}
\bptok{imsref}%
\end{bmisc}
%
\endbibitem

%b12 #&#
\bibitem[\protect\citeauthoryear{Chatterjee}{2009}]{Chatterjee09}
%
\begin{barticle}[mr]
\bauthor{\bsnm{Chatterjee},~\bfnm{Sourav}\binits{S.}}
(\byear{2009}).
\btitle{Fluctuations of eigenvalues and second order {P}oincar\'e
inequalities}.
\bjournal{Probab. Theory Related Fields}
\bvolume{143}
\bpages{1--40}.
\bid{doi={10.1007/s00440-007-0118-6}, issn={0178-8051}, mr={2449121}}
\bptok{imsref}%
\end{barticle}
%
\endbibitem

%b13 #&#
\bibitem[\protect\citeauthoryear{Garsia, Rodemich and
Rumsey}{1969}]{Garsia68}
%
\begin{barticle}[mr]
\bauthor{\bsnm{Garsia},~\bfnm{A.}\binits{A.}},
\bauthor{\bsnm{Rodemich},~\bfnm{E.}\binits{E.}} \AND
\bauthor{\bsnm{Rumsey},~\bfnm{H.}\binits{H.}}
(\byear{1969}).
\btitle{On some extremal positive definite functions}.
\bjournal{J. Math. Mech.}
\bvolume{18}
\bpages{805--834}.
\bid{mr={0251454}}
\bptnote{check year}%
\bptok{imsref}%
\end{barticle}
%
\endbibitem

%b14 #&#
\bibitem[\protect\citeauthoryear{Grenander and Szeg{\H
{o}}}{1984}]{Grenander84}
%
\begin{bbook}[mr]
\bauthor{\bsnm{Grenander},~\bfnm{Ulf}\binits{U.}} \AND
\bauthor{\bsnm{Szeg{\H{o}}},~\bfnm{G{\'a}bor}\binits{G.}}
(\byear{1984}).
\btitle{Toeplitz Forms and Their Applications},
\bedition{2nd} ed.
\bpublisher{Chelsea Publishing}, \blocation{New York}.
\bid{mr={0890515}}
\bptok{imsref}%
\end{bbook}
%
\endbibitem

%b15 #&#
\bibitem[\protect\citeauthoryear{Hammond and Miller}{2005}]{hammond05}
%
\begin{barticle}[mr]
\bauthor{\bsnm{Hammond},~\bfnm{Christopher}\binits{C.}} \AND
\bauthor{\bsnm{Miller},~\bfnm{Steven~J.}\binits{S.~J.}}
(\byear{2005}).
\btitle{Distribution of eigenvalues for the ensemble of real symmetric
{T}oeplitz matrices}.
\bjournal{J. Theoret. Probab.}
\bvolume{18}
\bpages{537--566}.
\bid{doi={10.1007/s10959-005-3518-5}, issn={0894-9840}, mr={2167641}}
\bptok{imsref}%
\end{barticle}
%
\endbibitem

%b16 #&#
\bibitem[\protect\citeauthoryear{Kargin}{2009}]{Kargin09}
%
\begin{barticle}[mr]
\bauthor{\bsnm{Kargin},~\bfnm{Vladislav}\binits{V.}}
(\byear{2009}).
\btitle{Spectrum of random {T}oeplitz matrices with band structure}.
\bjournal{Electron. Commun. Probab.}
\bvolume{14}
\bpages{412--421}.
\bid{doi={10.1214/ECP.v14-1492}, issn={1083-589X}, mr={2551851}}
\bptok{imsref}%
\end{barticle}
%
\endbibitem

%b17 #&#
\bibitem[\protect\citeauthoryear{Lata{\l}a}{1997}]{Latala97}
%
\begin{barticle}[mr]
\bauthor{\bsnm{Lata{\l}a},~\bfnm{Rafa{\l}}\binits{R.}}
(\byear{1997}).
\btitle{Estimation of moments of sums of independent real random variables}.
\bjournal{Ann. Probab.}
\bvolume{25}
\bpages{1502--1513}.
\bid{doi={10.1214/aop/1024404522}, issn={0091-1798}, mr={1457628}}
\bptok{imsref}%
\end{barticle}
%
\endbibitem

%b18 #&#
\bibitem[\protect\citeauthoryear{Meckes}{2007}]{Meckes07}
%
\begin{barticle}[mr]
\bauthor{\bsnm{Meckes},~\bfnm{Mark~W.}\binits{M.~W.}}
(\byear{2007}).
\btitle{On the spectral norm of a random {T}oeplitz matrix}.
\bjournal{Electron. Commun. Probab.}
\bvolume{12}
\bpages{315--325 (electronic)}.
\bid{doi={10.1214/ECP.v12-1313}, issn={1083-589X}, mr={2342710}}
\bptok{imsref}%
\end{barticle}
%
\endbibitem

\end{thebibliography}
\end{document}